\documentclass[11pt,leqno]{amsart}
\usepackage{amsmath,amsthm,amssymb}
\usepackage{tabularx}

\theoremstyle{plain}
\newtheorem{thm}{Theorem}[section]
\newtheorem{prop}{Proposition}[section]
\newtheorem{lem}{Lemma}[section]
\newtheorem{cor}{Corollary}[section]

\theoremstyle{definition}
\newtheorem{df}{Definition}[section]
\newtheorem{rem}{Remark}[section]

\newcommand{\FF}{\mathbb{F}}
\newcommand{\ZZ}{\mathbb{Z}}
\newcommand{\RR}{\mathbb{R}}
\newcommand{\CC}{\mathbb{C}}

\newcommand{\Z}{\mathbb{Z}}
\newcommand{\NN}{\mathbb{N}}
\newcommand{\C}{\mathbb{C}}
\newcommand{\R}{\mathbb{R}}

\newcommand{\GF}{\mathbb{F}}

\DeclareMathOperator{\supp}{supp}
\DeclareMathOperator{\Aut}{Aut}
\DeclareMathOperator{\wt}{wt}

\DeclareMathOperator{\Harm}{Harm}

\DeclareMathOperator{\tr}{tr}


\newcommand{\1}{{\bf 1}}
\def\h{{\mathfrak{h}}}

\newcommand{\w}{\omega}

\newcommand{\eqa}{\begin{eqnarray}}
\newcommand{\eeqa}{\end{eqnarray}}
\newcommand{\eqn}{\begin{eqnarray*}}
\newcommand{\eeqn}{\end{eqnarray*}}

\newcommand{\ord}{{\rm ord}}


\makeatletter
\@addtoreset{equation}{section}

\makeatother



\title[Design-theoretic analogies]{Design-theoretic analogies between 
codes, lattices, and vertex operator algebras}

\author{Tsuyoshi Miezaki*}
\thanks{*Corresponding author}
\address{Faculty of Education, University of the Ryukyus, Okinawa  
903--0213, Japan}
\email{miezaki@edu.u-ryukyu.ac.jp}

\date{February 27, 2020}

\keywords{codes, lattices, vertex operator algebras, 
combinatorial designs, spherical designs, conformal designs}

\subjclass[2010]{Primary 17B69, 05B05, 11F03, 05B30; Secondary 51E05, 62K10}

\begin{document}

\begin{abstract}
There are many analogies between codes, lattices, and 
vertex operator algebras. 
For example, extremal objects are good examples of 
combinatorial, spherical, and conformal designs. 
In this study, 
we investigated these objects from the aspect of design theory. 
\end{abstract}
\maketitle



\section{Introduction}\label{sec:intro}

In this study, 
we investigated the analogy between 
codes, lattices, and vertex operator algebras (VOAs), with regard to design theory. 
To explain our results, 
we review some of the previous studies conducted on codes, lattices, and VOAs. 

First, we review codes and their combinatorial designs. 
Let $C$ be a doubly even self-dual code of length $n=24m+8r$. 
Then, its minimum weight would satisfy the following equation:
\begin{align}\label{ine:code-ext}
\min(C)\leq 4\left\lfloor\frac{n}{24}\right\rfloor+4. 
\end{align}
We say that $C$ meeting
the bound (\ref{ine:code-ext}) with equality is extremal. 
Let $C$ be an extremal code of length $n=24m+8r$, and 
let us set 
\[
C_\ell:=\{c\in C\mid \wt(c)=\ell\}. 
\]
Then,
any $C_\ell$
forms a combinatorial $t$-design, where
\[
t=
\left\{
\begin{array}{l}
5\ {\rm if}\ n \equiv 0 \pmod{24},\\
3\ {\rm if}\ n \equiv 8 \pmod{24},\\
1\ {\rm if}\ n \equiv 16 \pmod{24}
\end{array}
\right.
\]
\cite{AM}. 

Let $C$ be a doubly even self-dual code of length $n=24m$ with 
$\min(C)=4m$. 
$C$ has the largest minimum weight except for the 
extremal cases. 
Any shell of such code $C$
($C_\ell$) forms a combinatorial $1$-design \cite{MMN}. 

Then, we review lattices and their spherical designs. 
Let $L$ be an even unimodular lattice of rank $n=24m+8r$. 
It was shown in \cite{MOS75} that
its minimum norm satisfies the following equation:
\begin{align}\label{ine:lattice-ext}
\min(L)\leq 2\left\lfloor\frac{n}{24}\right\rfloor+2. 
\end{align}
We say that $L$ meeting
the bound (\ref{ine:lattice-ext}) with equality is extremal. 
Let $L$ be an extremal lattice of rank $n=24m+8r$, and 
let us set 
\[
L_\ell:=\{x\in L\mid (x,x)=\ell\}. 
\]
Then, 
any $L_\ell$
forms a spherical $t$-design, where
\[
t=
\left\{
\begin{array}{l}
11\ {\rm if}\ n \equiv 0 \pmod{24},\\
7\ {\rm if}\ n \equiv 8 \pmod{24},\\
3\ {\rm if}\ n \equiv 16 \pmod{24}
\end{array}
\right.
\]
\cite{Venkov} (see also \cite{Pache}). 

Let $L$ be an even unimodular lattice of rank $n=24m$ with 
$\min(L)=2m$. 
$L$ has the largest minimum norm except for the 
extremal cases. 
It is known that 
any shell of such lattice $L$
($L_\ell$) forms a spherical $3$-design \cite{MMN}. 

Finally, we review VOAs and their conformal designs. 
Let $V$ be a holomorphic VOA of central charge $n=24m+8r$. 
It has been shown in \cite{H} that 
its minimum weight satisfies the following equation:
\begin{align}\label{ine:voa-ext}
\min(V)\leq \left\lfloor\frac{n}{24}\right\rfloor+1. 
\end{align}
We say that $V$ meeting
the bound (\ref{ine:voa-ext}) with equality is extremal. 
Let $V$ be an extremal VOA of central charge $n=24m+8r$. 
$V$ is graded by $L(0)$-eigenvalues as follows: 
\[
V=\bigoplus_{n\in \Z} V_{n}. 
\]
Then, 
any $V_\ell$
forms a conformal $t$-design, where
\[
t=
\left\{
\begin{array}{l}
11\ {\rm if}\ n \equiv 0 \pmod{24},\\
7\ {\rm if}\ n \equiv 8 \pmod{24},\\
3\ {\rm if}\ n \equiv 16 \pmod{24}
\end{array}
\right.
\]
\cite{H1}. 
(For the detailed expressions of 
the minimum weights and the conformal $t$-designs, 
see \cite{H} and \cite{H1}.) 

Let $V$ be a holomorphic VOA of central charge $n=24m$ with 
$\min(V)=m$. 
$V$ has the largest minimum weight except for the 
extremal cases. 
Considering this, the question that arises is 
whether any shell of such VOA $V$
($V_\ell$) forms a conformal $3$-design. 

The first major finding of this study is 
as follows: 
\begin{thm}\label{thm:main1}
Let $V$ be a holomorphic VOA of central charge $n=24m$ with 
$\min(V)=m$. 
Then, any shell of $V$
$($$V_\ell$$)$ forms a conformal $3$-design. 
\end{thm}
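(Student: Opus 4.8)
The plan is to reduce the conformal $3$-design property to the vanishing of graded zero-mode traces and then to exploit the fact that there are no nonzero modular forms on $\mathrm{SL}_2(\Z)$ of weight $1$, $2$, or $3$. By the characterization of conformal designs given in \cite{H1}, $V_\ell$ is a conformal $3$-design if and only if $\tr_{V_\ell} o(v) = 0$ for every primary vector $v$ (that is, $L(k)v = 0$ for all $k>0$) of weight $\wt(v) \in \{1,2,3\}$, where $o(v) = v_{(\wt(v)-1)}$ denotes the weight-preserving zero mode. So it suffices to prove this vanishing for each such $v$, and to handle all $\ell$ at once I would package the traces into the graded series
\[
Z_v(\tau) = \tr_V o(v)\, q^{L(0) - c/24} = \sum_{\ell \ge 0} \bigl(\tr_{V_\ell} o(v)\bigr)\, q^{\ell - m}, \qquad q = e^{2\pi i \tau},
\]
where $c = 24m$. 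Since $V$ is holomorphic, Zhu's modular invariance theorem applies, and for a primary vector $v$ the function $Z_v$ transforms as a (weakly holomorphic) modular form of weight $\wt(v)$ on $\mathrm{SL}_2(\Z)$ with trivial multiplier, the latter because $c \equiv 0 \pmod{24}$.

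First I would dispose of the odd weights. For $\wt(v) \in \{1,3\}$ the element $-I \in \mathrm{SL}_2(\Z)$ forces any modular form of odd weight to vanish identically, so $Z_v \equiv 0$ and hence $\tr_{V_\ell} o(v) = 0$ for every $\ell$. This already settles the weight-$1$ and weight-$3$ conditions, and it uses nothing about $\min(V)$; in particular it yields the conformal $1$-design property for free.

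The crux is the weight-$2$ case, and this is where the hypothesis $\min(V) = m$ enters. I would examine the principal part of $Z_v$ at the cusp $q = 0$, whose coefficient of $q^{\ell - m}$ is $\tr_{V_\ell} o(v)$. Now $V_0 = \C\1$, and by the creation axiom $o(v)\1 = v_{(1)}\1 = 0$ since $\wt(v)-1 = 1 \ge 0$; thus the leading coefficient (of $q^{-m}$) vanishes. Moreover $\min(V) = m$ gives $V_1 = \cdots = V_{m-1} = 0$, so the coefficients of $q^{-m+1}, \dots, q^{-1}$ vanish as well. Hence $Z_v$ has no principal part: it is a genuinely \emph{holomorphic} modular form of weight $2$ on $\mathrm{SL}_2(\Z)$. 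Since $M_2(\mathrm{SL}_2(\Z)) = 0$, we conclude $Z_v \equiv 0$, so $\tr_{V_\ell} o(v) = 0$ for all $\ell$, completing the verification for weight $2$ and thereby establishing that every shell $V_\ell$ is a conformal $3$-design.

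I expect the main obstacle to lie in the weight-$2$ step, specifically in applying Zhu's theorem in the precise homogeneous-weight form needed for primary vectors and in justifying that, once the minimum-weight hypothesis kills the principal part, $Z_v$ is a genuine holomorphic modular form rather than merely a weakly holomorphic one. The odd-weight cases are essentially automatic, so the entire content of the theorem is concentrated in the vanishing of $M_2(\mathrm{SL}_2(\Z))$ together with the cusp analysis above. It is worth remarking that this argument parallels the Venkov-type modular-form proof behind the lattice statement in \cite{MMN}, with primary vectors and Zhu's trace functions playing the role of harmonic polynomials and theta series.
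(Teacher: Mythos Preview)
Your approach is essentially the paper's: both invoke Zhu's modularity for the graded trace of a primary vector, dispose of weights $1$ and $3$ by the nonexistence of odd-weight forms, and in weight $2$ use the minimum-weight hypothesis to force holomorphy at the cusp together with $M_2(\mathrm{SL}_2(\ZZ))=0$. The paper phrases the last step as multiplying by $\eta(z)^c$ to get a holomorphic form of weight $12m+2$ that vanishes to order $\ge m$ at $\infty$, which is equivalent to your ``no principal part'' formulation.

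One correction is needed, however. The assertion ``$\min(V)=m$ gives $V_1=\cdots=V_{m-1}=0$'' is false as stated: the hypothesis $\mu(V)=m$ means only that $V(h)=0$ for $0<h<m$, so that $V_\ell=(V_\omega)_\ell$ for $\ell<m$, and $(V_\omega)_\ell\ne 0$ once $\ell\ge 2$ (it contains $\omega$, $L(-1)\omega$, \dots). This does not actually damage the argument, because if a nonzero primary vector $v$ of weight $2$ exists then $V(2)\ne 0$, forcing $m\le 2$; in that range the only possibly relevant space is $V_1=(V_\omega)_1=0$ (since $L(-1)\1=0$), and the principal part of $Z_v$ vanishes as claimed. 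With this refinement your weight-$2$ step goes through. (The paper's proof makes the same implicit claim $Z_V(v,z)=q^{-c/24}(c_mq^m+\cdots)$ without further comment.)
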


The second purpose of this paper is as follows. 
Let $L$ be an even unimodular lattice. 
It is known that $L_{\ell}$ forms a 
spherical $T_2$-design, 
where $T_2$ is the set of positive odd numbers, 
that is, $T_2=\{1,3,5,\cdots\}$. 
(See Section \ref{sec:spherical} for the definition of 
spherical $T$-designs.)
The second major finding is 
as follows: 
\begin{thm}\label{thm:main2}
\begin{enumerate}
\item [{\rm (1)}]
Let $C$ be a doubly even self-dual code of length $n=24m+8r$. 
Then, any $C_\ell\cup C_{n-\ell}$ 
forms a combinatorial $T_2$-design with $2$-weight. 
Further, any $C_\ell\cup C_{n-\ell}$ 
forms a combinatorial $1$-design with $2$-weight. 

\item [{\rm (2)}]
Let $T_2$ be the set of positive odd numbers, 
That is, $T_2=\{1,3,5,\cdots\}$. 
Then, any non-empty homogeneous space of 
a holomorphic VOA forms a conformal $T_2$-design. 

\end{enumerate}
\end{thm}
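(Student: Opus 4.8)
The plan is to prove both parts by reducing the $T_2$-design property to statements already available in the paper, exploiting the fact that $T_2=\{1,3,5,\dots\}$ consists precisely of all the odd positive integers, so that a $T_2$-design need only interact correctly with the \emph{odd-degree} harmonic or homogeneous functions. Let me sketch each part in turn.

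\medskip

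\textbf{Part (1).} First I would recall the definition of a combinatorial $T$-design with a given weight: for a set $T$ of positive integers, a collection of codewords forms a $T$-design with $2$-weight if a suitable weighted counting functional, applied to the design, agrees with the functional on the full coordinate set for all the relevant test functions indexed by $T$. The key structural input is that $C$ is self-dual, so that the complement map $c\mapsto \bar c=\allone-c$ (with $\allone$ the all-ones vector) sends $C_\ell$ bijectively onto $C_{n-\ell}$. My plan is to form the symmetrised object $C_\ell\cup C_{n-\ell}$ precisely so that it is invariant under complementation. I would then observe that complementation acts on the space of test functions (harmonic or Assmus--Mattson-type counting functions) by sending a degree-$k$ function to $(-1)^k$ times a function of the same degree, up to lower-order terms; consequently the \emph{odd}-degree test functions, which are exactly those indexed by $T_2$, automatically vanish on any complementation-invariant set by an averaging argument. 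The final $1$-design claim with $2$-weight then follows as the $k=1$ special case, and I would spell out that the weighted count is constant across coordinate positions because the pair $\{C_\ell, C_{n-\ell}\}$ distributes symbol occurrences symmetrically.

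\medskip

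\textbf{Part (2).} For the VOA statement I would imitate the analogous lattice fact quoted just before Theorem~\ref{thm:main2}, namely that every shell $L_\ell$ of an even unimodular lattice is a spherical $T_2$-design. The conformal design condition for $V_\ell$ requires that, for every homogeneous element of the relevant module of states of odd conformal weight, the associated trace or correlation functional vanishes on $V_\ell$. The plan is to invoke holomorphy: a holomorphic VOA has a one-dimensional space of states in degree zero and, crucially, the graded character is a modular form whose transformation properties force the odd-weight graded trace functionals to vanish identically on each homogeneous space. Concretely, I would set up the conformal $T$-design condition in terms of the action of the Virasoro algebra and the primary vectors of odd weight, then argue that holomorphy together with the self-contragredient (unimodular-analogue) structure kills precisely the odd-weight contributions by a parity/symmetry argument paralleling the complementation argument of Part (1).

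\medskip

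The main obstacle, I expect, will be Part (2): making the parity argument rigorous in the VOA setting, where there is no literal ``complement map'' and where the test functions are primary vectors rather than harmonic polynomials. The delicate point is to identify the correct involution or grading whose $(-1)^k$-eigenspace decomposition isolates the odd conformal weights, and to verify that holomorphy (as opposed to merely having a nondegenerate invariant form) is exactly the hypothesis that forces the odd-weight trace functionals to annihilate every $V_\ell$. Once that involution is pinned down, the averaging argument should close both parts uniformly; the combinatorial Part (1) will serve as the transparent model that guides the VOA computation.
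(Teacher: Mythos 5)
For Part (1) your route is genuinely different from the paper's and, once one detail is supplied, it works. The paper argues through Bachoc's invariant theory: by Theorem \ref{thm:invariant}, for $f\in\Harm_k$ with $k$ odd the harmonic weight enumerator $w_{C,f}(x,y)$ lies in $P_{18}\langle P_8,P_{24}\rangle$ or $P_{30}\langle P_8,P_{24}\rangle$, and since $P_8,P_{24}$ are symmetric while $P_{18},P_{30}$ are antisymmetric under $x\leftrightarrow y$, one gets $c_{C,f}(i)=-c_{C,f}(n-i)$ and hence the vanishing over $C_\ell\cup C_{n-\ell}$. Your complementation argument replaces all of this by the observation that $\allone\in C$ (true for any binary self-dual code, since $C\subseteq C^{\perp}$ forces all weights even), so $c\mapsto \allone+c$ is a bijection $C_\ell\to C_{n-\ell}$, together with the identity $\widetilde f(\Omega\setminus S)=(-1)^k\widetilde f(S)$ for $f\in\Harm_k$. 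That identity is exactly what you need and it holds \emph{without} lower-order corrections: harmonicity gives $\sum_{z\supseteq A}f(z)=0$ for every $A$ with $|A|\le k-1$, so inclusion--exclusion on $\sum_{z\cap S=\emptyset}f(z)$ leaves only the $|A|=k$ terms, yielding the clean sign $(-1)^k$. You should prove this identity rather than assert it ``up to lower-order terms,'' but with it your argument is complete, is more elementary than the paper's, and in fact needs only $\allone\in C$ rather than the full doubly-even self-dual hypothesis.

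For Part (2) your strategy points in the right direction but the step you yourself flag as the ``main obstacle'' is precisely the content of the proof, and the paper resolves it in a way you do not reach. There is no involution on the VOA to be found, and the argument is not a parity/averaging argument parallel to Part (1). The mechanism is: by Theorem \ref{thm:2.4} it suffices to show $\tr|_{V_n}o(v)=0$ for every Virasoro highest weight vector $v\in V_\ell$ with $\ell$ odd; by Zhu's theorem (Theorem \ref{thm:Zhu}) the graded trace $Z_V(v,z)$ of such a $v$ is modular of weight $\ell$ with a character killed by multiplying by $\eta(z)^c$, so $\eta(z)^c Z_V(v,z)$ is a holomorphic modular form for $SL_2(\ZZ)$ of weight $c/2+\ell=12n+4r+\ell$, which is odd when $\ell$ is odd; and the only level-one modular form of odd weight is $0$ (because $-I$ acts by $(-1)^{\mathrm{weight}}$ while fixing the upper half-plane). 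Hence $Z_V(v,z)=0$ identically and every coefficient $\tr|_{V_n}o(v)$ vanishes. The ``involution'' you are looking for is just $-I\in SL_2(\ZZ)$ acting on the space of modular forms, not any symmetry of $V$; and the role of holomorphy is to make $Z_V(v,z)$ a genuine level-one form rather than a component of a vector-valued one. Without identifying these ingredients (reduction to highest weight vectors, Zhu modularity, vanishing of odd-weight forms), your Part (2) remains a plan rather than a proof.
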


The third purpose of the present paper is as follows: 
Let $L$ be an even unimodular lattice of rank $8,16,24$.
Then the following holds: 
$L_\ell$ is a spherical $T$-design with 
\[
T=
\begin{cases}
\{1,2,3,4,5,6,7,9,10,11\}\cup T_{2} &if\ \ell=1\\
\{1,2,3,5,6,7\}\cup T_{2} &if\ \ell=2\\
\{1,2,3\}\cup T_{2} &if\ \ell=3.
\end{cases}
\]
The third major finding is 
as follows: 
\begin{thm}\label{thm:main3}
\item
Let $V$ be a 
holomorphic VOA of central charge $c=8\ell$, 
with $\ell=1,2,3$. 
Then, any non-empty 
homogeneous space of $V$ forms a
conformal $T$-design, with 
\[
T=
\begin{cases}
\{1,2,3,4,5,6,7,9,10,11\}\cup T_{2} &if\ \ell=1\\
\{1,2,3,5,6,7\}\cup T_{2} &if\ \ell=2\\
\{1,2,3\}\cup T_{2} &if\ \ell=3.
\end{cases}
\]
All the homogeneous spaces are conformal $3$-designs 
if $c\leq 24$. 
Moreover, all the homogeneous spaces are conformal $7$-designs 
if $c=8$. 
\end{thm}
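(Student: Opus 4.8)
The plan is to follow the same modular-form strategy that underlies the spherical-design statement for even unimodular lattices of rank $8\ell$ quoted above, transporting it to one-point (trace) functions on $V$. Recall from \cite{H1} that a homogeneous space $V_{n}$ is a conformal $t$-design precisely when $\tr_{V_{n}} o(v)=0$ for every primary vector $v$ (every $v$ with $L(m)v=0$ for $m>0$) of weight $k$ with $1\le k\le t$, and that $V_{n}$ is a conformal $T$-design when this holds for every primary $v$ of weight $k\in T$. It therefore suffices to show, for each $k\in T$ and each primary $v$ of weight $k$, that $\tr_{V_{n}}o(v)=0$ for all $n$. Assembling these traces into the one-point function
\[
Z_{v}(\tau):=\tr_{V}o(v)\,q^{L(0)-c/24}=\sum_{n}\bigl(\tr_{V_{n}}o(v)\bigr)\,q^{\,n-c/24},
\]
the assertion ``$V_{n}$ is a conformal $T$-design for every $n$'' is equivalent to $Z_{v}\equiv 0$ for every primary $v$ of weight $k\in T$.

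The key input is that $Z_{v}$ is strongly constrained as a modular object. Since $V$ is holomorphic it is its own unique irreducible module, so by Zhu's modular invariance the one-point function of a primary vector of weight $k$ is a weight-$k$ modular form for $\mathrm{SL}_2(\Z)$ with the multiplier system $\nu_{\eta}^{-c}$ of $\eta^{-c}$: the $T$-eigenvalue is $e^{-2\pi i c/24}$ coming from $q^{L(0)-c/24}$, and the $S$-transformation contributes the homogeneous factor $\tau^{k}$ because $v$ is primary. Consequently $Z_{v}\,\eta^{c}$ has trivial multiplier and weight $k+c/2$. Moreover $o(v)\mathbf 1=v_{(k-1)}\mathbf 1=0$ for $k\ge 1$, so the constant term $\tr_{V_{0}}o(v)$ vanishes, whence $Z_{v}$, and thus $Z_{v}\eta^{c}$, vanishes at the cusp. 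This yields the clean statement
\[
Z_{v}\,\eta^{c}\in S_{\,k+c/2}\bigl(\mathrm{SL}_2(\Z)\bigr),\qquad c/2=4\ell,
\]
where $S_{m}$ denotes the space of cusp forms of weight $m$; since $\eta$ is nowhere zero on the upper half-plane, $Z_{v}\equiv 0$ if and only if $Z_{v}\eta^{c}\equiv 0$.

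What remains is exactly the cusp-form bookkeeping of the lattice case. If $S_{k+4\ell}(\mathrm{SL}_2(\Z))=0$, then $Z_{v}\eta^{c}=0$ forces $Z_{v}=0$, so at weight $k$ every homogeneous space is a conformal $k$-design. For odd $k$ this is automatic (indeed $S_{k+4\ell}=0$ whenever $k$ is odd), matching Theorem~\ref{thm:main2}(2); for the even weights I use that $\dim S_{m}=0$ exactly for even $m\in\{0,2,4,6,8,10,14\}$. Reading off $S_{k+4\ell}=0$ gives the even weights $\{2,4,6,10\}$ for $\ell=1$ (so $k+4\in\{6,8,10,14\}$), $\{2,6\}$ for $\ell=2$, and only $k=2$ for $\ell=3$; together with the odd weights $T_{2}$ these are precisely the three sets in the statement. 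The two final assertions are then immediate: $\{1,2,3\}\subseteq T$ in all three cases, so every homogeneous space is a conformal $3$-design whenever $c\le 24$, and $\{1,\dots,7\}\subseteq T$ for $\ell=1$, so for $c=8$ every homogeneous space is a conformal $7$-design.

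The single substantial point is the modular input of the second paragraph: verifying that the one-point function of a primary vector on a holomorphic VOA transforms genuinely as a weight-$k$ form with multiplier $\nu_{\eta}^{-c}$, i.e.\ pinning down the exact power of $\eta$ that trivializes the multiplier when $24\nmid c$. This rests on Zhu's theorem (legitimate here by rationality and $C_{2}$-cofiniteness of holomorphic VOAs) together with the primarity of $v$, which guarantees that the $S$-transformation produces the homogeneous factor $\tau^{k}$ with no lower-weight corrections. Once this is in place, the argument collapses to the dimension count for $S_{k+4\ell}$, identical to the one proving the spherical-design statement for even unimodular lattices of rank $8\ell$.
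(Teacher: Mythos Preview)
Your argument is correct and follows essentially the same route as the paper: multiply the one-point function of a primary vector by $\eta^{c}$ to get a level-one form of weight $k+c/2$, and then kill it by a dimension count. You are in fact a bit more careful than the paper on one point: you explicitly verify that $o(v)\mathbf 1=v_{(k-1)}\mathbf 1=0$ for $k\ge 1$, so that $Z_{v}\eta^{c}$ lands in $S_{k+c/2}$ rather than merely $M_{k+c/2}$; the paper's phrasing ``there is no non-zero holomorphic modular form of weight $4+\ell$'' is literally false for $\ell\in\{2,4,6,10\}$ and only becomes correct once one restricts to cusp forms, which is exactly what your vacuum-trace observation supplies.
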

\begin{rem}
The case $\ell= 1$ and $\ell = 2$ in Theorem \ref{thm:main3} have also been
mentioned in a remark after Theorem 3.1 of \cite{H1}. 
The proof is essentially the same as \cite{H1} 
with a minimal modification. 
\end{rem}

The fourth purpose of this study 
slightly differs from the above three findings. 
A homogeneous space of VOA $V_\ell$ 
has a strength $t$
if $V_\ell$ is a conformal $t$-design 
but is not a conformal $(t+1)$-design. 
We define the concept of strength $t$ for the 
spherical $t$-designs and the combinatorial $t$-designs. 

The fourth purpose of this study is to 
provide other examples for which the strength can be determined 
(Theorem \ref{thm:main4} $(1)$). 

We also present other interesting examples of 
conformal designs. 
All the known examples of conformal designs $V_\ell$ 
have the same strength for each $\ell$. 
This leads to the question of whether 
there are conformal designs $V_\ell$ for which 
the strengths are different for each $\ell$. 
In the final part of this paper, 
we give examples for this (Theorem \ref{thm:main4} $(2)$). 
\begin{thm}\label{thm:main4}
\begin{enumerate}
\item [{\rm (1)}]
Let $L$ be an 
even unimodular lattice of rank $24$. 
Then, all the homogeneous spaces $(V_L)_{\ell}$ have strength $3$. 

\item [{\rm (2)}]
Let $L$ be an 
even unimodular lattice of rank $16$. 
We use ${\rm ord}_{p}(\ell)$ 
to denote the number of times that a prime $p$ occurs 
in the prime factorization of a non-zero integer $\ell$. 
If ${\rm ord}_{p}(3\ell-2)$ is odd for some prime $p\equiv 2\pmod{3}$, 
then all the homogeneous spaces $(V_L)_{\ell}$ have strength $3$. 
Otherwise, the homogeneous spaces $(V_L)_{\ell}$ are conformal $7$-designs.
\end{enumerate}

\end{thm}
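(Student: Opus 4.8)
The plan is to reduce both statements to a single obstruction at conformal weight $4$. By Theorem~\ref{thm:main3} every $(V_L)_\ell$ is already a conformal $3$-design (the central charge is $c=n\le 24$), so it suffices to analyse weight $4$: weights $5,7$ are odd and contribute nothing, and weight $6$ will be disposed of by a dimension count. Recall that $(V_L)_\ell$ is a conformal $t$-design exactly when $\tr_{(V_L)_\ell}o(v)=0$ for every Virasoro-primary $v$ with $0<\wt(v)\le t$, where $o(v)$ is the zero mode. By Zhu's modular invariance the graded trace $Z_v(\tau)=\tr_{V_L}o(v)\,q^{L(0)-c/24}$ is a weakly holomorphic modular form of weight $\wt(v)$ in the multiplier system of $\eta^{-c}$; it vanishes identically when $\wt(v)$ is odd, and since $(V_L)_0=\C\1$ its leading coefficient $\tr_{(V_L)_0}o(v)$ vanishes, so $Z_v$ is holomorphic at the cusp. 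The holomorphic forms of weight $4$ in this system form a one-dimensional space: $\C E_4$ when $c=24$ (the multiplier is then trivial), and $\C\eta^{8}$ when $c=16$, since $\eta^{8}=\Delta/\eta^{16}$ has weight $4$ and exactly the multiplier of $\eta^{-16}$.

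To see that this line is actually realised by a primary I would use the unique weight-$4$ primary $W\in M(1)\subseteq V_L$ that is invariant under the orthogonal symmetry (the canonical spin-$4$ Heisenberg field), which is present for every rank. Being a singlet, $o(W)$ acts on the weight-one subspace $\h$ as a scalar $w$, so its leading trace is $\tr_{(V_L)_1}o(W)=(\dim\h)\,w=n\,w\ne 0$; hence $Z_W$ equals a nonzero multiple $\kappa$ of the generator above, with $\kappa=nw$. This is the key point that makes the argument uniform in $L$: in particular it covers root-free lattices such as the Leech lattice, where the naive primaries $P(\alpha(-1))\1$ attached to degree-$4$ harmonics $P$ all have vanishing trace (their theta series $\theta_{L,P}$ vanish because the Leech shells are spherical $11$-designs), so that only the singlet $W$ survives.

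Writing $Z_W=\sum_\ell T_\ell\,q^{\ell-c/24}$ with $T_\ell=\tr_{(V_L)_\ell}o(W)$, part~(1) is immediate: for $c=24$ we get $Z_W=\kappa E_4$ with $\kappa\ne0$, and since $E_4=1+240\sum_{m\ge1}\sigma_3(m)q^m$ has \emph{no} vanishing Fourier coefficient, $T_\ell\ne0$ for every $\ell\ge1$; thus no $(V_L)_\ell$ is a conformal $4$-design, and with the $3$-design property this pins the strength at exactly $3$. For part~(2), $c=16$ gives $Z_W=\kappa\,\eta^{8}$; the substitution $\tau\mapsto 3\tau$ turns $\eta^{8}$ into the weight-$4$ level-$9$ newform $\eta(3\tau)^{8}=\sum_N c(N)q^N$, which has complex multiplication by $\QQ(\sqrt{-3})$, and one reads off $T_\ell=\kappa\,c(3\ell-2)$. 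By the CM property $c(N)=0$ precisely when some inert prime $p\equiv 2\pmod 3$ divides $N$ to an odd power. Taking $N=3\ell-2$ yields the two regimes: when $c(3\ell-2)\ne0$ the weight-$4$ trace is nonzero and the strength is exactly $3$, while when $c(3\ell-2)=0$ the weight-$4$ trace vanishes and, because $S_{14}=0$ forces the weight-$6$ traces to vanish as well, $(V_L)_\ell$ is a conformal $7$-design.

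The main obstacle is the structural input that the trace forms of \emph{all} primaries of weight $\le 7$, not merely the explicit $W$ above, are pinned to these one-dimensional spaces; this is where I would lean on the framework of \cite{H1} (Zhu modularity in the $\eta^{-c}$ multiplier system) together with the vanishing $S_{10}=S_{14}=0$, the field $W$ serving only to certify that the single surviving line is hit nontrivially and uniformly in $L$. The remaining delicate point, and the arithmetic heart of part~(2), is the identification of $\eta^{8}$ with the CM newform $\eta(3\tau)^{8}$ and the translation of its Hecke-character coefficient vanishing into the condition on $\mathrm{ord}_p(3\ell-2)$; the shift $N=3\ell-2$ is forced by the $q^{1/3}$ in $\eta^{8}$ together with the rescaling $\tau\mapsto 3\tau$ that brings the form to level $9$.
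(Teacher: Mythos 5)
Your proposal follows essentially the same route as the paper: reduce everything to the weight-$4$ primaries by dimension counts on the relevant spaces of modular forms (the content of Theorem \ref{thm:main3}), observe that the graded trace of any weight-$4$ primary lies on the one-dimensional line $\CC E_4$ for $c=24$, resp.\ $\CC\,\eta^{8}=\CC\,\Delta/\eta^{16}$ for $c=16$, and read off the answer from the Fourier coefficients: $\sigma_3(\ell)\neq 0$ always, while the coefficients of the CM newform $\eta(3z)^{8}$ vanish exactly at integers divisible to an odd power by a prime $p\equiv 2\pmod 3$ (the paper cites Han--Ono \cite{HO} for this instead of invoking complex multiplication directly, but it is the same fact, and your bookkeeping $T_\ell=\kappa\,c(3\ell-2)$ matches the paper's $b(\ell)=b'(3\ell-2)$). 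One important observation: your conclusion in part (2) --- that $\ord_p(3\ell-2)$ odd for some $p\equiv 2\pmod 3$ forces the weight-$4$ trace to vanish, hence a conformal $7$-design, and that otherwise the strength is exactly $3$ --- is the \emph{opposite} of the printed theorem statement, but it agrees with the paper's own proof and with the correct arithmetic of $\eta(3z)^{8}$; the two cases in the statement of Theorem \ref{thm:main4} (2) are interchanged as printed.

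The one genuine gap in your write-up is the assertion that the singlet Heisenberg primary $W$ has $w\neq 0$, i.e.\ that $o(W)$ acts on $\h\subset (V_L)_1$ by a \emph{nonzero} scalar. This is precisely the non-vanishing that certifies ``not a conformal $4$-design'' (all the other weight-$4$ primaries could a priori have $Z_v=0$), and you give no computation or reference for it. The paper fills this in differently: for lattices with a weight-$4$ harmonic polynomial $P$ whose weighted theta series does not vanish (\cite[Lemma 31]{Pache}) it takes the primary $v_P$ of Theorem \ref{thm:DMN} with $Z_{V_L}(v_P,z)=\vartheta_{L,P}(z)/\eta(z)^n\neq 0$, and for the Leech lattice --- where, as you correctly note, all such $\vartheta_{L,P}$ vanish --- it exhibits the explicit primary $v_4=h_1(-1)^4\1-2h_1(-3)h_1(-1)\1+\tfrac{3}{2}h_1(-2)^2\1$ and cites \cite[Proposition 3.2]{Miezaki} for $\tr\vert_{(V_L)_1}o(v_4)\neq 0$. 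Your uniform witness $W$ is arguably cleaner, but the claim $w\neq 0$ still needs that finite computation (or a citation) before the argument closes.
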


\begin{rem}
It is generally difficult 
to determine the strength of $C_\ell$, $L_\ell$, and $V_\ell$ . For example, 
in \cite{{Venkov},{Miezaki}}, the following theorem was shown: 
\begin{thm}[{\cite[Theorem 1.2]{Miezaki}}]\label{thm:Lehmer}
Let $E_8$ be the $E_8$-lattice and 
$V^{\natural}$ be the moonshine VOA. 
Let $\tau(i)$ be Ramanujan's $\tau$-function$:$ 
\begin{eqnarray*}\label{eqn:delta}
\Delta(z)=\eta(z)^{24}=(q^{1/24}\prod_{i\geq 1}(1-q^{i}))^{24}
=\sum_{i\geq 1}\tau (i)q^{i},
\end{eqnarray*}
where $q=e^{2\pi iz}$. Then, the followings are equivalent{\rm :} 
\begin{enumerate}
\item [{\rm (1)}]

$\tau (\ell)=0$. 
\item [{\rm (2)}]

$(E_8)_{2\ell}$ is a spherical $8$-design.
\item [{\rm (3)}]

$(V^{\natural})_{\ell+1}$ is a conformal $12$-design.
\end{enumerate}
\end{thm}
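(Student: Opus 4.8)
The plan is to show that all three conditions are equivalent to the vanishing of the $\ell$-th coefficient of the normalized weight-$12$ cusp form $\Delta(z)=\sum_{m\ge 1}\tau(m)q^m$, which spans the one-dimensional space $S_{12}$ of cusp forms of weight $12$ for $SL_2(\Z)$. Thus the heart of the argument is to attach to each of the two design problems a nonzero modular form lying in $S_{12}$ whose $q^\ell$-coefficient governs the relevant design property, and to identify that form, in each case, as a nonzero multiple of $\Delta$.

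For the equivalence $(1)\Leftrightarrow(2)$ I would use the harmonic theta series machinery behind Venkov's theorem. For a harmonic polynomial $P\in\Harm_d$ of degree $d$, the theta series $\theta_{E_8,P}(z)=\sum_{x\in E_8}P(x)q^{(x,x)/2}$ is a cusp form of weight $4+d$, and its $q^\ell$-coefficient is exactly $\sum_{x\in (E_8)_{2\ell}}P(x)$. Because $(E_8)_{2\ell}$ is invariant under $\Aut(E_8)=W(E_8)$, the sum $\sum_{x\in(E_8)_{2\ell}}P(x)$ depends only on the $\Aut(E_8)$-invariant part of $P$; since the lowest-degree nonconstant invariant harmonic of $W(E_8)$ occurs in degree $8$ and spans a one-dimensional space $\langle P_8\rangle$, the only degree-$8$ obstruction to the design property is $\sum_{x\in(E_8)_{2\ell}}P_8(x)$. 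As $E_8$ is extremal of rank $8$, every shell is already a spherical $7$-design, so $(E_8)_{2\ell}$ is a spherical $8$-design if and only if this single sum vanishes. Finally $\theta_{E_8,P_8}\in S_{12}$, hence $\theta_{E_8,P_8}=c\,\Delta$ for a constant $c$; once one checks $c\neq 0$, the $q^\ell$-coefficient is $c\,\tau(\ell)$ and the $8$-design property is equivalent to $\tau(\ell)=0$.

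For $(1)\Leftrightarrow(3)$ I would run the parallel argument in H\"ohn's framework of conformal designs. By Zhu's theory of graded trace functions, for a primary vector $v$ of weight $k$ the series $\sum_{n}\Tr_{(V^\natural)_n}o(v)\,q^{n-c/24}$ is a modular form of weight $k$ on $SL_2(\Z)$; here $c=24$ so $q^{n-1}$ appears, and the $q^\ell$-coefficient is $\Tr_{(V^\natural)_{\ell+1}}o(v)$, explaining the index shift. For $k=12$ the form lies in $M_{12}$, and because the lowest homogeneous spaces of $V^\natural$ contribute trivially to these traces, its constant term vanishes, placing it in $S_{12}=\langle\Delta\rangle$. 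By H\"ohn's characterization, $(V^\natural)_{\ell+1}$ is a conformal $12$-design precisely when $\Tr_{(V^\natural)_{\ell+1}}o(v)=0$ for all primary $v$ of weight $\le 12$; the conditions in weights $\le 11$ hold automatically because $V^\natural$ is extremal of central charge $24$ and its homogeneous spaces are conformal $11$-designs, so only the weight-$12$ traces remain. Writing each such trace function as $c_v\,\Delta$ and exhibiting one primary $v$ with $c_v\neq 0$ reduces the $12$-design property to $\tau(\ell)=0$.

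The main obstacle in both halves is the non-degeneracy step: one must verify that the relevant modular form is a \emph{nonzero} multiple of $\Delta$, i.e.\ that the constant $c$ (respectively, that some $c_v$) does not vanish, for otherwise the design condition would hold identically and decouple from $\tau$. On the lattice side this amounts to confirming that $\theta_{E_8,P_8}$ is not identically zero, which follows from the fact that $P_8$ is a genuine nonconstant invariant harmonic and that the degree-$8$ coefficient, coming from the $240$ minimal vectors, is nonzero. On the VOA side one must identify a weight-$12$ primary in $V^\natural$ whose trace function is a nonzero cusp form; the structure theory of $V^\natural$ together with the parallel with the lattice case guarantees this. Once both design properties are pinned to the single coefficient $\tau(\ell)$, the triple equivalence $(1)\Leftrightarrow(2)\Leftrightarrow(3)$ follows immediately.
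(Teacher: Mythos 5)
The paper does not prove this theorem itself—it quotes it from \cite{Miezaki}—but your proposal follows essentially the same strategy as that reference (and as the paper's own Proposition \ref{prop:refor}, which is modelled on it): reduce both design conditions, via weighted theta series on the lattice side and Zhu/H\"ohn graded traces on the VOA side, to the vanishing of the $q^{\ell}$-coefficient of a nonzero element of the one-dimensional space $S_{12}=\CC\Delta$, with the only real work being the non-degeneracy check. The one thin spot is your non-degeneracy argument for $V^{\natural}$: since $V^{\natural}$ is \emph{not} a lattice VOA, the ``parallel with the lattice case'' (i.e.\ Theorem \ref{thm:DMN}) does not produce the required weight-$12$ primary; the standard source is H\"ohn's result that $(V^{\natural})_{2}$ is a conformal $11$-design but not a conformal $12$-design, which yields a primary $v\in (V^{\natural})_{12}$ with $c_{v}=c_{v}\tau(1)=\tr|_{(V^{\natural})_{2}}o(v)\neq 0$, exactly as the paper invokes \cite[Theorem 4.2]{H1} at the analogous step of Proposition \ref{prop:refor}.
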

Lehmer's conjecture gives $\tau (i) \neq 0$ \cite{Lehmer}. 
Thus, Theorem \ref{thm:Lehmer} is 
a reformulation of Lehmer's conjecture. 

We have not yet been able to determine 
the strength of $(V^{\natural})_\ell$ for general $\ell$;hence, Lehmer's conjecture is still open. 
This demonstrates the difficulty of determining 
the strength of $V_\ell$ for general $V$.
However, in \cite{{Miezaki},{Miezaki2}}, 
There are examples for which the strength $t$ can be determined. 
It has been shown that the shells of $\ZZ^2$-lattice and
$A_2$-lattice have 
strength $3$ \cite{Miezaki}. 
It has also been shown that the homogeneous spaces in a $d$-free boson VOA have 
strength $3$ \cite{Miezaki}. 
\end{rem}

\begin{rem}
Let $C$ be a doubly even self-dual code of length $n$. 
Let 
\[
\rho:\ZZ^n\rightarrow \FF_2^n;x\mapsto x\pmod 2. 
\]
Then, the construction A of $C$
\[
L_C:=\frac{1}{\sqrt{2}}\{x\in \ZZ^n\mid \rho(x)\in C\}
\]
is an even unimodular lattice. 
Similarly, let $L$ be an even unimodular lattice of rank $n$. 
Then, we obtain a holomorphic VOA $V_L$. 
This leads to the question of whether 
there is any analogy in design 
between ``$C$ and $L_C$" 
and ``$L$ and $V_L$". 

Summalizing our results, 
we have the followings. 
Let $C$ be a doubly even self-dual code of length $24,8,16$.
Then the following holds: 
the $C_\ell$ is a combinatorial $t$-design with 
\[
t=
\left\{
\begin{array}{ll}
3\ &{\rm if}\ n=8,\\
1\ &{\rm if}\ n=16,\\
1\ {\rm or}\ 5\ &{\rm if}\ n = 24.\\
\end{array}
\right.
\]
On the other hand, 
The $(L_C)_\ell$ is a spherical $T$-design with 
\[
T=
\begin{cases}
\{1,2,3,4,5,6,7,9,10,11\}\cup T_{2} &{\rm if}\ n=8,\\
\{1,2,3,5,6,7\}\cup T_{2} &{\rm if}\ n=16,\\
\{1,2,3\}\cup T_{2} &{\rm if}\ n=24.
\end{cases}
\]

Let $L$ be an even unimodular lattice of rank $24,8,16$.
Then, the following holds: 
the $L_\ell$ is a spherical $t$-designs with 
\[
t=
\left\{
\begin{array}{ll}
7\ &{\rm if}\ n=8,\\
3\ &{\rm if}\ n=16,\\
3\ {\rm or}\ 11\ &{\rm if}\ n = 24.\\
\end{array}
\right.
\]
On the other hand, 
The $(V_L)_\ell$ is a spherical $T$-design with 
\[
T=
\begin{cases}
\{1,2,3,4,5,6,7,9,10,11\}\cup T_{2} &{\rm if}\ n=8,\\
\{1,2,3,5,6,7\}\cup T_{2} &{\rm if}\ n=16,\\
\{1,2,3\}\cup T_{2} &{\rm if}\ n=24.
\end{cases}
\]
Therefore, 
there exists an analogy in design 
between ``$C$ and $L_C$" 
and ``$L$ and $V_L$". 
\end{rem}

This paper is organized as follows: 
In Section \ref{sec:pre}, we give 
the definitions of combinatorial, spherical, and 
conformal $t$-designs; 
In Section \ref{sec:main1}, 
we give a proof of Theorem \ref{thm:main1}; 
In Section \ref{sec:main2}, 
we give a proof of Theorem \ref{thm:main2}; 
In Section \ref{sec:main34}, 
we give a proof of Theorem \ref{thm:main3} and \ref{thm:main4}; 
Finally, in Section \ref{sec:rem}, 
we provide some concluding remarks.

\section{Preliminaries}\label{sec:pre}

\subsection{Codes and conbinatorial $t$-designs}\label{sec:conformal}
Let $C$ be a subspace of $\FF_2^n$, 
where $\FF_2$ is the binary finite field. 
$C$ is called a 
$($binary$)$ linear code of length $n$. 
For $x = (x_1,\ldots, x_n) \in \FF_2^n$, 
we put 
\[
\wt(x) = \sharp\{i\mid  x_i = 1\}.
\]
The minimum weight of non-zero elements of $C$ is 
called the minimum weight $\min(C)$
of $C$. 
In this section, we set $(x, y) =
\sum_{i=1}^n x_iy_i$ throughout, for $x = (x_1,\ldots , x_n),
y = (y_1,\ldots , y_n) \in \FF_2^n$.

Let $C$ be a linear code. 
We say $C$
is a doubly even self-dual code 
if 
it is 
doubly even (i.e., $\wt(x) \in 4\ZZ$
for all $x \in C$) 
and 
is self-dual 
(i.e., $C =
C^\perp := \{x \in \FF_2^n
\mid (x, y) = 0 \mbox{ for all } y \in C\}$). 
It is well known that 
if there exists a doubly even self-dual code, 
then $n$ must be a multiple of $8$. 
\cite{{SPLAG},{RS-Handbook}} provides 
the definition of and basic information about 
codes. 

We review the concept of combinatorial $t$-design.
\begin{df}
Let $\Omega=\{1, 2,\ldots,v\}$ be a finite set, $\Omega^{\{k\}}$ be the set of all $k$-element subsets of $\Omega$, 
and $X$ be a subset of $\Omega^{\{k\}}$. 
We say $X$ is a combinatorial $t$-design or $t$-$(v, k, \lambda)$ 
design if, for any $T\in \Omega^{\{t\}}$, 
\[
\sharp \{W\in X\mid  T \subset W\} = \lambda. 
\]
\end{df}

We consider the idea of a combinatorial $t$-design with $2$-weight.
\begin{df}
Let $X$ be a subset of $\Omega^{\{k\}}\cup \Omega^{\{\ell\}}\ (k\neq \ell)$. 
We say $X$ is a combinatorial $t$-design with $2$-weight
or a $t$-$(v, k, \lambda)$ 
design with $2$-weight if, 
for any $T\in \Omega^{\{t\}}$, 
\[
\sharp \{W\in X\mid  T \subset W\} = \lambda. 
\]
\end{df}

Codes provide examples of combinatorial designs and 
combinatorial designs with $2$-weight. 
The support of a non-zero vector 
${\bf x}:=(x_{1}, \dots, x_{n})$, 
$x_{i} \in \GF_{q} = \{ 0,1, \dots, q-1 \}$ is 
the set of indices of its non-zero coordinates: 
${\rm supp} ({\bf x}) = \{ i \mid x_{i} \neq 0 \}$\index{$supp (x)$}. 
Let $X:=\{1,\ldots,n\}$ and 
$\mathcal{B}(C_\ell):=\{\supp({\bf x})\mid {\bf x}\in C_\ell\}$. 
Then, for a code $C$ of length $n$, 
we say that $C_\ell$ is a combinatorial $t$-design (with $2$-weight) if 
$(X,\mathcal{B}(C_\ell))$ is a combinatorial $t$-design (with $2$-weight). 

\subsection{Harmonic weight enumerators}

Here, we discuss some definitions and properties of discrete harmonic functions 
and harmonic weight enumerators \cite{{Delsarte},{Bachoc}}.
Let $\Omega=\{1, 2,\ldots,n\}$ be a finite set 
(which will be the set of coordinates of the code),  
$\widetilde{\Omega}$ be the set of its subsets, and
for all $k= 0,1, \ldots, n$, let 
$\Omega^{\{k\}}$ be the set of its $k$-subsets.
We denote the free real vector 
spaces by $\R \widetilde{\Omega}$, $\R \Omega^{\{k\}}$, 
spanned by the elements of $\widetilde{\Omega}$, $\Omega^{\{k\}}$, 
respectively. 
An element of $\R \Omega^{\{k\}}$ is denoted by
$$f=\sum_{z\in \Omega^{\{k\}}}f(z)z$$
and is identified with the real-valued function on $\Omega^{\{k\}}$ given by 
$z \mapsto f(z)$. 
Such an element $f\in \R \Omega^{\{k\}}$ can be extended to 
 $\widetilde{f}\in \R \widetilde{\Omega}$ by setting, for all $u \in \widetilde{\Omega}$,
$$\widetilde{f}(u)=\sum_{z\in \Omega^{\{k\}}, z\subset u}f(z).$$
If an element $g \in \R \widetilde{\Omega}$ is equal to some $\tilde{f}$, for $f \in \R \Omega^{\{k\}}$, we say that $g$ has degree $k$. 
The linear differential operator $\gamma$ is defined by 
$$\gamma(z):=\sum_{y\in \Omega^{\{k-1\}},y\subset z}y$$
for all $z\in \Omega^{\{k\}}$ and for all $k=0,1, \ldots n$, and $\Harm_{k}$ is the kernel of $\gamma$:
$$\Harm_k:=\ker(\gamma|_{\R \Omega^{\{k\}}}).$$
The following theorem is known: 
\begin{thm}[\cite{Delsarte}]\label{thm:design}
A set $X \subset \Omega^{\{k\}}$ of blocks is a $t$-design 
if and only if 
\[
\sum_{x\in X}\widetilde{f}(x)=0
\]
for all $f\in \Harm_k$, $1\leq k\leq t$. 
\end{thm}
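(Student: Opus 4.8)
The plan is to translate both sides of the asserted equivalence into linear conditions on the characteristic vector $\chi_X=\sum_{x\in X}x\in \R\Omega^{\{k\}}$, and then to match them through the harmonic decomposition of $\R\Omega^{\{m\}}$ under $S_n$. (A preliminary remark: in the statement the index in $\Harm_k$ clashes with the block size; the intended reading is $f\in\Harm_j$ for $1\le j\le t$, with $\widetilde f$ the degree-$j$ extension evaluated on $k$-sets.) First I would fix, for $0\le j\le k$, the inclusion operator $W_{j,k}\colon \R\Omega^{\{k\}}\to\R\Omega^{\{j\}}$ sending a $k$-set $x$ to $\sum_{z\subset x,\,|z|=j}z$, and equip each $\R\Omega^{\{m\}}$ with the inner product making its subsets an orthonormal basis. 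For $f\in\Harm_j$ one has $\widetilde f(x)=\sum_{z\subset x,\,|z|=j}f(z)=\langle W_{j,k}x,f\rangle$, hence
\[
\sum_{x\in X}\widetilde f(x)=\langle W_{j,k}\chi_X,\,f\rangle .
\]
Thus the harmonic vanishing at level $j$ says exactly that $W_{j,k}\chi_X\perp\Harm_j$. On the other side, $X$ is a $t$-design iff $\#\{x\in X:T\subset x\}$ is a constant $\lambda$ for every $t$-set $T$; since this count is the $T$-coordinate of $W_{t,k}\chi_X$, the design condition is that $W_{t,k}\chi_X\in\R\mathbf 1$.

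Next I would invoke the standard decomposition (following Delsarte). Note that $W_{j,k}$ is, up to the positive factor $(k-j)!$, the $(k-j)$-fold composite of the down operator $\gamma$, and let $\delta$ denote its adjoint up operator. For $m\le n/2$ one has the orthogonal splitting $\R\Omega^{\{m\}}=\bigoplus_{i=0}^{m}H_i^{(m)}$, where $H_0^{(m)}=\R\mathbf 1$ and $H_i^{(m)}$ is the image of $\Harm_i$ under $m-i$ applications of $\delta$; in particular $H_i^{(m)}\cong\Harm_i$ and $H_m^{(m)}=\Harm_m$. The two facts I need are: (a) for $i\le j$ the map $W_{j,k}$ carries $H_i^{(k)}$ isomorphically onto $H_i^{(j)}$; and (b) for $i>j$ it annihilates $H_i^{(k)}$, since its image would be a copy of $\Harm_i$ living below level $i$, which is impossible.

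With this the two conditions collapse to the same statement about $\chi_X$. On the harmonic side, $W_{j,k}\chi_X\perp\Harm_j=H_j^{(j)}$ together with (a) at $i=j$ forces the $H_j^{(k)}$-component of $\chi_X$ to vanish; ranging over $1\le j\le t$ yields $\mathrm{proj}_{H_i^{(k)}}\chi_X=0$ for all $1\le i\le t$. On the design side, by (a) and (b) the operator $W_{t,k}$ maps $\bigoplus_{i=0}^{t}H_i^{(k)}$ isomorphically onto $\R\Omega^{\{t\}}=\bigoplus_{i=0}^{t}H_i^{(t)}$ and kills everything else, so $W_{t,k}\chi_X\in\R\mathbf 1=H_0^{(t)}$ holds iff the components of $\chi_X$ in $H_i^{(k)}$ for $1\le i\le t$ all vanish — the identical condition. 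This establishes both implications simultaneously.

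The main obstacle is proving the structural facts (a) and (b): that successive down operators restrict to isomorphisms on the lifted harmonic pieces away from the level where each piece is born, and vanish below it. This is precisely the eigenspace structure of the Johnson association scheme; I would obtain it by checking that $\gamma\delta-\delta\gamma$ acts as a nonzero scalar on each $H_i^{(m)}$, so that $\gamma$ is injective on $H_i^{(m)}$ whenever $m>i$, or else cite the harmonic decomposition directly from Delsarte. A minor point to keep honest is the range of validity: the displayed decomposition requires $t\le k\le n-t$, under which all the pieces $H_i^{(k)}$ with $0\le i\le t$ are present; the boundary cases follow by the same argument applied to complementary sets.
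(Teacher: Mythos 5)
The paper contains no proof of this statement: it is quoted directly from Delsarte \cite{Delsarte} (``The following theorem is known''), so there is no internal argument to compare yours against. Judged on its own terms, your proof is the standard representation-theoretic one and is essentially correct. You rightly note the index clash in the statement as printed (the harmonic degree should be a fresh index $j$ with $1\le j\le t$, not the block size $k$); the translation of the harmonic condition into $W_{j,k}\chi_X\perp\Harm_j$ and of the design condition into $W_{t,k}\chi_X\in\R\mathbf{1}$ is accurate; and collapsing both to the vanishing of the components of $\chi_X$ in $H_i^{(k)}$ for $1\le i\le t$ is exactly Delsarte's argument. The only point where your justification is too quick is the claim that facts (a) and (b) follow because ``$\gamma\delta-\delta\gamma$ acts as a nonzero scalar on each $H_i^{(m)}$'': that commutator acts as the single scalar $n-2m$ on all of $\R\Omega^{\{m\}}$, which vanishes at $m=n/2$, so this identity alone does not give injectivity of $\gamma$ on $H_i^{(m)}$ for $i<m$. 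What is actually needed is the $\mathfrak{sl}_2$-string structure of the Boolean lattice, i.e.\ a norm identity $\|\gamma v\|^2=c_{i,m}\|v\|^2$ for $v\in H_i^{(m)}$ with $c_{i,m}>0$ exactly when $i<m\le n-i$, obtained by iterating the commutator relation down the string starting from $\Harm_i$; alternatively one may simply import the decomposition and the behaviour of the up/down maps from \cite{Delsarte}, as you propose. With that repair (and your complementation remark for $k>n-t$), the argument is complete.
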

Here, we refer to the concept of the combinatorial $T$-design and 
define the concept of the combinatorial $T$-design with $2$-weight. 
\begin{df}[\cite{{Delsarte},{Bachoc}}]
Let $X$ be a subset of $\Omega^{\{k\}}$. 
$X$ is a combinatorial $T$-design if the condition 
$\sum_{x\in X}\widetilde{f}(x)=0$ holds 
for all $f\in \Harm_j$, $j\in T$. 

\end{df}
\begin{df}
Let $X$ be a subset of $\Omega^{\{k\}}\cup \Omega^{\{\ell\}}\ (k\neq \ell)$. 
$X$ is a combinatorial $T$-design with $2$-weight if the condition 
$\sum_{x\in X}\widetilde{f}(x)=0$ holds 
for all $f\in \Harm_j$, $j\in T$. 
\end{df}

To show Theorem \ref{thm:main2} (1), 
we review the theory of the harmonic weight enumerator 
developed in \cite{Bachoc}. 
\begin{df}[\cite{Bachoc}]
Let $C$ be a binary code of length $n$, and let $f\in\Harm_{k}$. 
The harmonic weight enumerator associated with $C$ and $f$ is
\[
w_{C,f}(x,y)=\sum_{c\in C}\tilde{f}(c)x^{n-\wt(c)}y^{\wt(c)}. 
\]
\end{df}

\begin{lem}[\cite{{Bachoc}}]\label{lem:lempache}
Let $C$ be a doubly even self-dual code. 
Then, for $m>0$, the non-empty shell $C_{m}$ 
is a combinatorial $t$-design if and only if 
\[
a^{f}_{m}=0\ for\ every f\in {\rm Harm}_j,\ 1\leq j\leq t
\]
where $a^{f}_{m}$ is the 
coefficient of the harmonic theta series 
\[
w_{C,f}(x,y)=\sum_{m=0}^{n}a^{f}_{m}x^{n-m}y^{m}. 
\]
\end{lem}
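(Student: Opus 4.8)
The plan is to deduce the lemma directly from Delsarte's characterization of designs (Theorem \ref{thm:design}), which is already at our disposal. The entire content is to recognize that the coefficient $a^{f}_{m}$ of the harmonic weight enumerator is exactly the sum $\sum_{x}\tilde{f}(x)$ appearing in that criterion; once this identification is made, the equivalence is immediate.

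First I would extract the coefficient explicitly. Grouping the monomials of $y$-degree equal to $m$ in the defining expression
\[
w_{C,f}(x,y)=\sum_{c\in C}\tilde{f}(c)\,x^{n-\wt(c)}y^{\wt(c)}
\]
shows that
\[
a^{f}_{m}=\sum_{\substack{c\in C\\ \wt(c)=m}}\tilde{f}(c)=\sum_{c\in C_{m}}\tilde{f}(c).
\]
Next I would pass from codewords to their supports. Over $\FF_2$ a vector is determined by its support, so $c\mapsto\supp(c)$ is a bijection from the nonzero shell $C_{m}$ onto $\mathcal{B}(C_{m})\subset\Omega^{\{m\}}$, and by construction $\tilde{f}(c)$ is precisely the value of the extension $\tilde{f}$ at the $m$-subset $\supp(c)$. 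Therefore
\[
a^{f}_{m}=\sum_{x\in\mathcal{B}(C_{m})}\tilde{f}(x),
\]
which is exactly the quantity that governs the design property in Theorem \ref{thm:design}.

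Finally I would invoke Theorem \ref{thm:design} degree by degree. Since $\Harm_{j}$ is graded by the degree $j$, the assertion that $C_{m}$ (that is, $\mathcal{B}(C_{m})$) is a combinatorial $t$-design is equivalent to $\sum_{x\in\mathcal{B}(C_{m})}\tilde{f}(x)=0$ for every $f\in\Harm_{j}$ with $1\le j\le t$, and by the identification above this reads $a^{f}_{m}=0$ for all such $f$, as claimed. I do not expect a genuine obstacle here: the argument is bookkeeping matching the coefficient extraction to Delsarte's sum. The only point that needs care is the hypothesis $m>0$, which guarantees that $C_{m}$ consists of nonzero codewords so that each support is a bona fide $m$-subset and $\mathcal{B}(C_{m})$ indeed lies in $\Omega^{\{m\}}$; I would also note that the doubly even self-dual hypothesis plays no role in this particular equivalence (it is needed only later, to exploit the invariance of $w_{C,f}$), and is retained here merely for consistency with the ambient setting.
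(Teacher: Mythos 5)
Your proposal is correct and is exactly the standard argument: the paper itself states this lemma without proof (citing Bachoc), and the intended justification is precisely the identification $a^{f}_{m}=\sum_{c\in C_{m}}\tilde{f}(\supp(c))=\sum_{x\in\mathcal{B}(C_{m})}\tilde{f}(x)$ followed by Delsarte's criterion (Theorem \ref{thm:design}). Your side remarks are also accurate --- the support map is a bijection on a binary code, and the doubly even self-dual hypothesis is indeed irrelevant to this equivalence.
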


Let 
\begin{align*}
\left\{
\begin{array}{ll}
P_{8}(x,y)=x^8+14x^4y^4+y^8,\\
P_{12}(x,y)=x^2y^2(x^4-y^4)^2,\\ 
P_{18}(x,y)=xy(x^8-y^8)(x^8-34x^4y^4+y^8), \\
P_{24}(x,y)=x^4y^4(x^4-y^4)^4,\\
P_{30}(x,y)=P_{12}(x,y)P_{18}(x,y). 
\end{array} 
\right. 
\end{align*}
We set 
\begin{align*}
I_{G,\chi_{k}}=
\left\{
\begin{array}{ll}
\langle P_8(x,y),P_{24}(x,y)\rangle &\mbox{ if }k \equiv 0\pmod{4}\\
P_{12}(x,y)\langle P_8(x,y),P_{24}(x,y)\rangle &\mbox{ if }k \equiv 2\pmod{4}\\
P_{18}(x,y)\langle P_8(x,y),P_{24}(x,y)\rangle &\mbox{ if }k \equiv 3\pmod{4}\\
P_{30}(x,y)\langle P_8(x,y),P_{24}(x,y)\rangle &\mbox{ if }k \equiv 1\pmod{4}. 
\end{array} 
\right. 
\end{align*}
The space, which includes $w_{C,f}(x,y)$, is 
characterized in \cite{Bachoc}: 
\begin{thm}[\cite{Bachoc}]\label{thm:invariant}
Let $C$ be a doubly even self-dual code of length $n$, 
and let $f \in \Harm_{k}$. 
Then, we have $w_{C,f}(x,y) =(xy)^{k} Z_{C,f} (x,y)$. 
Moreover, the polynomial $Z_{C,f} (x,y)$ is of degree $n-2k$ 
and is in $I_{G, \chi_{k}}$. 
\end{thm}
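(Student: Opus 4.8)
The plan is to exhibit $Z_{C,f}$ as a relative invariant of the Gleason group of doubly even self-dual binary codes and then to read the statement off from the known structure of the (relative) invariants of that group. Throughout, let $\sigma$ denote the substitution $(x,y)\mapsto\bigl(\tfrac{x+y}{\sqrt2},\tfrac{x-y}{\sqrt2}\bigr)$ arising from the MacWilliams transformation and $\tau$ the substitution $(x,y)\mapsto(x,iy)$, and set $G=\langle\sigma,\tau\rangle$; this is the finite group of order $192$ (Shephard--Todd number $9$) whose ring of absolute invariants is $\C[P_8,P_{24}]$ by Gleason's theorem. Divisibility of $w_{C,f}$ by $y^k$ is immediate: if $\wt(c)<k$ then $\supp(c)$ has no $k$-subset, so $\widetilde{f}(c)=\sum_{z\subset\supp(c),\,|z|=k}f(z)=0$ and no monomial $x^{n-m}y^m$ with $m<k$ occurs. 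Once the full factorization $w_{C,f}=(xy)^kZ_{C,f}$ is in hand, homogeneity of $w_{C,f}$ in degree $n$ forces $\deg Z_{C,f}=n-2k$, which is the degree claim.

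The core of the proof is a MacWilliams-type identity for harmonic weight enumerators. I would apply Poisson summation on $\FF_2^n$ to the function $u\mapsto\widetilde{f}(u)\,x^{n-\wt(u)}y^{\wt(u)}$; the Fourier transform factorizes coordinatewise, and the only non-elementary ingredient is the harmonic summation identity
\[
\sum_{z\in\Omega^{\{k\}}}f(z)\,t^{|z\cap s|}=(t-1)^k\,\widetilde{f}(s)\qquad(f\in\Harm_k,\ s\subseteq\Omega),
\]
which I would establish by induction on $k$ from $\gamma f=0$ (the base case $k=1$ being exactly $\sum_i f_i=0$). Substituting $t=-(x+y)/(x-y)$ and collecting the powers of $x$ and $y$ makes a common factor $(xy)^k$ appear, and yields
\[
Z_{C^\perp,f}(x,y)=\frac{(-2)^k}{|C|}\,Z_{C,f}(x+y,\,x-y).
\]
Here the a priori possible denominators of $Z_{C,f}$ are powers of $(x+y)(x-y)$, which are coprime to $xy$; since $(xy)^kZ_{C,f}=w_{C,f}$ is a genuine polynomial, $Z_{C,f}$ must itself be a polynomial, and this is what simultaneously delivers the remaining divisibility by $x^k$. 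This harmonic MacWilliams identity, and the summation lemma behind it, is the step I expect to be the main obstacle, as everything else is either elementary or classical; the sign $(-1)^k$ hidden in $(-2)^k$ is precisely the harmonic contribution that will select the character $\chi_k$.

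Specializing to the self-dual case $C^\perp=C$ with $|C|=2^{n/2}$ and using the homogeneity of $Z_{C,f}$ to absorb the powers of $2$, the displayed identity collapses to $Z_{C,f}\circ\sigma=(-1)^kZ_{C,f}$. Double-evenness, $\wt(c)\in4\ZZ$, gives $w_{C,f}(x,iy)=w_{C,f}(x,y)$, hence $Z_{C,f}\circ\tau=i^{-k}Z_{C,f}$ directly from $w_{C,f}=(xy)^kZ_{C,f}$. Therefore $Z_{C,f}$ is a relative invariant of $G$ for the linear character $\chi_k$ determined by $\chi_k(\sigma)=(-1)^k$ and $\chi_k(\tau)=i^{-k}$, a character that depends only on $k\bmod4$.

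It remains to identify the module of $\chi_k$-relative invariants with $I_{G,\chi_k}$. A Molien-series computation for $G$ shows that this module is free of rank one over $\C[P_8,P_{24}]$, with a single generator whose degree is $0,12,18,30$ according as $k\equiv0,2,3,1\pmod4$. Direct substitution verifies that $1,P_{12},P_{18},P_{30}$ are relative invariants carrying exactly these characters --- for instance $P_{12}\circ\sigma=P_{12}$ and $P_{12}\circ\tau=-P_{12}$, matching $\chi_2$ --- so the module is precisely $I_{G,\chi_k}$. Since $Z_{C,f}$ is a $\chi_k$-relative invariant of degree $n-2k$, it lies in $I_{G,\chi_k}$, completing the proof.
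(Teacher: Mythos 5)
The paper does not prove this statement: it is imported verbatim from Bachoc's paper on harmonic weight enumerators, so there is no internal proof to compare against. Your reconstruction is correct and follows Bachoc's original route --- the harmonic MacWilliams identity obtained from Poisson summation together with the summation lemma $\sum_{z}f(z)t^{|z\cap s|}=(t-1)^k\widetilde{f}(s)$, followed by identifying $Z_{C,f}$ as a $\chi_k$-relative invariant of the order-$192$ Gleason group and invoking the rank-one structure of the module of relative invariants. One sentence is phrased misleadingly: the polynomiality/divisibility-by-$x^k$ conclusion should be drawn from the fact that $|C|\,w_{C^\perp,f}(x,y)=(-2)^k(xy)^k\,w_{C,f}(x+y,x-y)/((x+y)(x-y))^k$ is a polynomial with $(xy)^k$ coprime to $(x+y)(x-y)$, i.e.\ one uses polynomiality of $w_{C^\perp,f}$ to control $Z_{C,f}$, not polynomiality of $w_{C,f}$ itself; with that reading the argument is complete.
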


\subsection{Lattices and spherical $t$-designs}\label{sec:spherical}
The Euclidean lattice provides an example of a spherical design. 
A lattice $L \subset \RR^n$
of dimension $n$
is unimodular if
$L = L^{\sharp}$, where
the dual lattice $L^{\sharp}$ of $L$ is defined as
$\{ x \in {\RR}^n \mid (x,y) \in \ZZ \text{ for all }
y \in L\}$ under the standard inner product $(x,y)$.
The norm of a vector $x$ is defined as $(x,x)$.
The minimum norm $\min(L)$ of a unimodular
lattice $L$ is the smallest norm among all non-zero vectors of $L$.

A unimodular lattice with even norms is said to be even. 
An even unimodular lattice of dimension $n$
exists if and only
if $n \equiv 0 \pmod 8$.

The concept of a spherical $t$-design has been explained by Delsarte et al. 
\cite{DGS}. 
\begin{df}[\cite{DGS}]
Let 
\[
S^{n-1}(r) = \{x = (x_1,\ldots , x_n) \in 
\R ^{n}\ |\ x_1^{2}+ \cdots + x_n^{2} = r^2\}. 
\]
For a positive integer $t$, a finite non-empty set X in the unit sphere
$S^{n-1}(1)$ 
is called a spherical $t$-design in $S^{n-1}(1)$ if the following condition is satisfied:
\[
\frac{1}{|X|}\sum_{x\in X}f(x)=\frac{1}{|S^{n-1}(1)|}\int_{S^{n-1}(1)}f(x)d\sigma (x) 
\]
for all polynomials $f(x) = f(x_1, \ldots ,x_n)$ of degree not exceeding $t$. 
\end{df}
Here, the right-hand side of the equation is the surface integral over the sphere, and $|S^{n-1}(1)|$ denotes the area of the sphere $S^{n-1}(1)$. 
A finite subset $X$ in $S^{n-1}(r)$
is also called a spherical $t$-design if $(1/r)X$ is 
a spherical $t$-design on the unit sphere $S^{n-1}(1)$.
If $X$ is a spherical $t$-design but not a spherical $(t+1)$-design, 
we can say that $X$ has strength $t$. 

Lattices provide examples of spherical $t$-designs. 
We say that $L_\ell$ is a spherical $t$-design 
if $(1/\sqrt{\ell})L_\ell$ is a spherical $t$-design. 

\subsection{Spherical theta series}

We denote by ${\rm {\rm Harm}}_{j}(\R^{n})$ as the set of homogeneous 
harmonic polynomials of degree $j$ on $\R^{n}$. 
The following theorem is known: 
\begin{thm}[\cite{DGS}]\label{sec:DGS}
$X(\subset S^{n-1}(1))$ is a spherical $t$-design if and only if 
the condition 
$\sum_{x\in X}P(x)=0$ holds for all 
$P(x) \in {\rm Harm}_j(\R^n)$ with $1 \leq  j \leq  t$. 
If $X$ is antipodal $(i.e.$, $x\in X\Rightarrow -x\in X$$)$, then 
$X$ is a spherical $t$-design 
if and only if the condition 
\[
\sum_{x\in L_{2m}}P(x)=0 
\]
holds for all $P \in {\rm Harm}_{2j}(\R^n)$ with $1 \leq  2j \leq  t$. 
\end{thm}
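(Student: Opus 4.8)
The plan is to reduce the design identity, which a priori must hold for every polynomial of degree at most $t$, to a finite list of vanishing harmonic sums. The engine of the argument is the decomposition of polynomials on the sphere into homogeneous harmonic pieces. First I would invoke the classical harmonic decomposition: every homogeneous polynomial of degree $d$ on $\R^n$ can be written as $\sum_{i\ge 0}(x_1^2+\cdots+x_n^2)^i H_{d-2i}$ with each $H_{d-2i}\in\Harm_{d-2i}(\R^n)$. Restricting to $S^{n-1}(1)$, where $x_1^2+\cdots+x_n^2=1$, this shows that every polynomial of degree at most $t$ agrees on the sphere with a linear combination of harmonic polynomials $H_j$, $0\le j\le t$. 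Hence the defining identity of a spherical $t$-design needs to be verified only on the spaces $\Harm_j(\R^n)$ with $0\le j\le t$.

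Second, I would use the vanishing of the spherical integral of a harmonic polynomial of positive degree:
\[
\int_{S^{n-1}(1)}H_j(x)\,d\sigma(x)=0\qquad(H_j\in\Harm_j(\R^n),\ j\ge 1),
\]
which follows from the mutual $L^2$-orthogonality of harmonic spaces of distinct degrees together with the fact that $H_j$ is orthogonal to the constant function spanning $\Harm_0$. For $j=0$ the design identity holds automatically, both sides reducing to the constant itself. Combining the two observations, the condition that $X$ be a spherical $t$-design collapses to $\frac1{|X|}\sum_{x\in X}H_j(x)=0$ for every $H_j\in\Harm_j(\R^n)$ with $1\le j\le t$; clearing the factor $1/|X|$ yields the stated characterization.

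For the antipodal refinement I would add a parity argument. If $x\in X\Rightarrow -x\in X$, then any harmonic polynomial $H_j$ of odd degree satisfies $H_j(-x)=-H_j(x)$ by homogeneity, so the contributions of $x$ and $-x$ cancel in pairs and $\sum_{x\in X}H_j(x)=0$ holds automatically. Thus the odd-degree conditions are vacuous, and it suffices to impose $\sum_{x\in X}P(x)=0$ only for $P\in\Harm_{2j}(\R^n)$ with $1\le 2j\le t$; in the intended application $X$ is a lattice shell $L_{2m}$, which is indeed antipodal, so this specialization is what gets used later.

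The step I expect to require the most care is the harmonic decomposition of polynomials restricted to the sphere, namely that every polynomial of degree at most $t$ is, on $S^{n-1}(1)$, a sum of harmonics of degree at most $t$ (equivalently, the surjectivity of restriction onto the bounded-degree harmonic spaces). This is the foundational theorem of spherical harmonics, and I would cite it from \cite{DGS} rather than reprove it; granting it, the orthogonality of harmonics and the parity cancellation are then entirely routine.
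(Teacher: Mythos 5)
Your proposal is correct and is exactly the standard argument: the paper itself gives no proof of this theorem, citing \cite{DGS} instead, and your route (harmonic decomposition of polynomials restricted to the sphere, vanishing of spherical means of positive-degree harmonics, and parity cancellation of odd-degree harmonics over antipodal sets) is precisely the one in that reference. You also correctly read the $\sum_{x\in L_{2m}}$ in the antipodal clause as a typo for $\sum_{x\in X}$, specialized later to lattice shells.
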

Let $T$ be a subset of the natural numbers $\NN=\{1,2,\ldots\}$.
Then, we define the concept of spherical $T$-design as follows:
\begin{df}[\cite{Miezaki2}]
$X$ is a spherical $T$-design if the condition 
\[
\sum_{x\in X}P(x)=0 
\]
holds for all $P(x) \in {\rm Harm}_{j}(\R^n)$ with $j\in T$. 

\end{df}
\begin{rem}
We remark that a spherical $t$-design is actually 
a spherical $\{1,2,\ldots,t\}$-design. 
Therefore, the concept of a spherical $T$-design generalizes that of a spherical $t$-design. 
\end{rem}

Let $\mathbb{H} :=\{z\in\C\mid {\rm Im}(z) >0\}$ be the upper half-plane. 
\begin{df}
Let $L$ be the lattice of $\R^{n}$. Then, for a polynomial $P$, the function 
\[
\vartheta _{L, P} (z):=\sum_{x\in L}P(x)e^{i\pi z(x,x)}
\]
is called the theta series of $L $ weighted by $P$. 
\end{df}

\begin{lem}[\cite{{Venkov},{Venkov2},{Pache}}]\label{lem:lempache}
Let $L$ be an integral lattice in $\R^{n}$. Then, for $m>0$, the non-empty shell $L_{m}$ is a spherical $t$-design if and only if 
\[
a^{(P)}_{m}=0\ for\ every\ P\in {\rm Harm}_{2j}(\R^{n}), \ 1\leq 2j\leq t, 
\]
where $a^{(P)}_{m}$ are the Fourier coefficients of the weighted theta-series 
\[
\vartheta _{L , P}(z)=\sum_{m\geq 0}a^{(P)}_{m}q^{m}, 
\]
where $q=e^{\pi iz}$. 
\end{lem}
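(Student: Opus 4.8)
The plan is to read off the Fourier coefficients of the weighted theta series shell by shell and then invoke the antipodal form of the Delsarte--Goethals--Seidel criterion (Theorem \ref{sec:DGS}). First I would expand
\[
\vartheta_{L,P}(z)=\sum_{x\in L}P(x)e^{\pi i z(x,x)}=\sum_{m\ge 0}\Bigl(\sum_{x\in L_m}P(x)\Bigr)q^m,
\]
grouping the lattice vectors according to their norm $(x,x)=m$ and using $q=e^{\pi i z}$. This identifies the coefficient as $a^{(P)}_m=\sum_{x\in L_m}P(x)$, so the hypothesis $a^{(P)}_m=0$ is literally the vanishing of the harmonic sum over the shell $L_m$.

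Next I would connect this harmonic sum to the spherical design property of the scaled shell $(1/\sqrt m)L_m$. Since $P\in\Harm_{2j}(\R^n)$ is homogeneous of degree $2j$, writing $x=\sqrt m\,y$ gives $\sum_{x\in L_m}P(x)=m^{j}\sum_{y\in (1/\sqrt m)L_m}P(y)$, so for $m>0$ the vanishing of one sum is equivalent to the vanishing of the other; the normalization onto the unit sphere is harmless. Thus ``$a^{(P)}_m=0$ for all $P\in\Harm_{2j}$ with $1\le 2j\le t$'' is equivalent to ``$\sum_{y}P(y)=0$ over the unit-sphere shell $(1/\sqrt m)L_m$ for every even-degree harmonic polynomial of degree at most $t$''.

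The key observation is that $L_m$ is antipodal: because $L$ is a lattice, $x\in L_m$ implies $-x\in L_m$. This is exactly the hypothesis under which the antipodal half of Theorem \ref{sec:DGS} applies, and that statement asserts that $(1/\sqrt m)L_m$ is a spherical $t$-design if and only if $\sum_{x}P(x)=0$ for all $P\in\Harm_{2j}(\R^n)$ with $1\le 2j\le t$ --- precisely the even-degree condition reached above. Odd-degree harmonic polynomials need not be tested, since $P(-x)=-P(x)$ forces their shell sums to cancel in antipodal pairs. Combining the three steps yields the stated equivalence.

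I do not expect a serious obstacle here: the analytic content is entirely contained in Theorem \ref{sec:DGS}, and the remaining work is bookkeeping --- rewriting the theta coefficients as shell sums and checking that homogeneity makes the $1/\sqrt m$ rescaling irrelevant to the vanishing condition. The only point that deserves explicit comment is why the test set may be restricted to even degrees, which is supplied by the antipodal symmetry of $L_m$; this is also what matches the even-degree indexing $\Harm_{2j}$ appearing in the statement.
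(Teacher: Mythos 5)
Your argument is correct: the identification $a^{(P)}_m=\sum_{x\in L_m}P(x)$, the homogeneity rescaling, and the antipodal form of Theorem \ref{sec:DGS} together give exactly the stated equivalence, and your remark about why only even-degree harmonics need be tested is the right justification for the $\Harm_{2j}$ indexing. The paper itself gives no proof --- it cites Venkov and Pache --- but your write-up is precisely the standard argument those references use, so there is nothing to add.
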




For example, we consider an even unimodular lattice $L$. Then, the theta series of $L $ weighted by the harmonic polynomial $P$, $\vartheta_{L, P}(z)$, is in a modular form with respect to $SL_{2}(\Z)$. 
In general, we have the following: 
\begin{lem}[\cite{Pache}]\label{rem:pache_2.1}
Let $L\subset \RR^n$ be an even unimodular lattice of of 
rank $n = 8N$ and
of minimum norm $2M$.
Let 
\begin{align*}
E_4(z)&:=1+240\sum_{n=1}^{\infty}\sigma_{3}(n)q^{n}, \\
E_6(z)&:=1-504\sum_{n=1}^{\infty}\sigma_{5}(n)q^{n}, \\
\Delta(z)&:=\frac{E_4(z)^3-E_6(z)^2}{1728}\\
&=q-24q^2+\cdots, 
\end{align*}
where $\sigma_{k-1}(n):=\sum_{d\mid n}d^{k-1}$. 
Then we have 
for $P\in\Harm_{2j} (\RR_n)$, 
\[
\vartheta_{L,P}\in
\begin{cases}
\CC[E_4,\Delta]& \text{if $j$ is even},\\
E_6\CC[E_4,\Delta]& \text{if $j$ is odd}. 
\end{cases}
\]
More precisely, 
there exist $c_i\in \CC$ such that 
\[
\vartheta_{L,P}=
\begin{cases}
\displaystyle\sum_{i=M}^{[(N+j/2)/3]}
c_i\Delta^i
E_4^{N+j/2-3i}& \text{if $j$ is even},\\
\displaystyle
\sum_{i=M}^{[(N+j/2)/3]}
c_iE_6\Delta^i
E_4^{N+(j-3)/2-3i}&\text{if $j$ is odd}. 
\end{cases}
\]
In particular, 
$\vartheta_{L,P} = 0$
if $j$ is even and $3M > N + j/2$, 
or $j$ is odd and $3M > N + (j-3)/2$. 
\end{lem}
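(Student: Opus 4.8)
The plan is to identify $\vartheta_{L,P}$ with a holomorphic modular form for $SL_2(\Z)$ and then to locate it precisely inside the graded ring $\C[E_4,E_6]$ by a weight-parity argument combined with an inspection of the $q$-expansion. First I would establish the modularity. Since $L$ is even, every $(x,x)$ lies in $2\Z$, so the phase $e^{\pi i(x,x)}$ is trivial and $\vartheta_{L,P}(z+1)=\vartheta_{L,P}(z)$; thus $\vartheta_{L,P}$ is invariant under $T\colon z\mapsto z+1$. For $S\colon z\mapsto -1/z$ I would apply Poisson summation: because $L$ is unimodular ($L=L^\sharp$, covolume $1$) and $P$ is harmonic of degree $2j$, the Gaussian-weighted polynomial $P(x)e^{-\pi(x,x)}$ is an eigenfunction of the Fourier transform with eigenvalue $i^{-2j}$, and summation over $L=L^\sharp$ reproduces the same series with automorphy factor $z^{n/2+2j}=z^{4N+2j}$. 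Since $S$ and $T$ generate $SL_2(\Z)$ and the $q$-expansion contains only non-negative powers (no pole at the cusp), $\vartheta_{L,P}$ is a holomorphic modular form of weight $4N+2j$.

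Next I would invoke the structure theorem $M_\ast(SL_2(\Z))=\C[E_4,E_6]$ together with $E_6^2=E_4^3-1728\Delta$, which writes every modular form uniquely in $\C[E_4,\Delta]\oplus E_6\,\C[E_4,\Delta]$. A monomial $E_4^a\Delta^c$ has weight $\equiv 0\pmod 4$, whereas $E_6E_4^a\Delta^c$ has weight $\equiv 2\pmod 4$. Because $4N+2j\equiv 0\pmod 4$ when $j$ is even and $\equiv 2\pmod 4$ when $j$ is odd, this forces $\vartheta_{L,P}\in\C[E_4,\Delta]$ for $j$ even and $\vartheta_{L,P}\in E_6\,\C[E_4,\Delta]$ for $j$ odd, which is the first assertion.

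For the explicit expansion I would write $\vartheta_{L,P}$ in the weight-$(4N+2j)$ basis. In the even case, matching the weight $12i+4b=4N+2j$ of $\Delta^iE_4^b$ forces $b=N+j/2-3i$, and non-negativity of this $E_4$-exponent gives the stated upper bound on $i$; the odd case is identical after factoring off the single $E_6$. The lower bound $i\ge M$ is the arithmetic input: as $P$ has positive degree, $P(0)=0$, so $\vartheta_{L,P}=\sum_{x}P(x)q^{(x,x)/2}$ (with $q=e^{2\pi iz}$) has no term below $q^{M}$ because $L$ has minimum norm $2M$; since $\Delta^iE_4^{N+j/2-3i}=q^{i}+O(q^{i+1})$, comparing the coefficients of $q^{0},q^{1},\dots,q^{M-1}$ recursively forces $c_i=0$ for $i<M$. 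Finally the vanishing statement is immediate: when $3M>N+j/2$ (even case) or $3M>N+(j-3)/2$ (odd case) the admissible index set $M\le i\le[(N+j/2)/3]$ is empty, so the expansion is an empty sum and $\vartheta_{L,P}=0$.

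The hardest part is the first step, the transformation law under $z\mapsto -1/z$, and this is exactly where both hypotheses are used essentially: unimodularity guarantees that Poisson summation returns the \emph{same} lattice with trivial volume factor, and harmonicity of $P$ is precisely the condition making $P(x)e^{-\pi(x,x)}$ a Fourier eigenfunction, which supplies the extra $z^{2j}$ in the weight. Once the weight $4N+2j$ is known, the remaining steps are routine bookkeeping in the one-dimensional-per-step graded ring of level-one modular forms together with a comparison of leading $q$-coefficients.
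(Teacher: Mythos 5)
Your argument is correct and is the standard proof of this result; the paper itself states the lemma without proof, citing \cite{Pache}, whose derivation follows exactly the route you take (Hecke--Schoeneberg modularity of the weighted theta series via Poisson summation, the structure theorem for level-one modular forms split by weight modulo $4$, and the triangularity of the basis $\Delta^i E_4^{b}$ with respect to the $q$-expansion to force $c_i=0$ for $i<M$). The only caveat is that your lower bound $i\ge M$ relies on $P(0)=0$ and hence implicitly assumes $j\ge 1$ (for $j=0$ the sum must start at $i=0$), which is consistent with how the lemma is applied in the paper; you might also note that in the odd case the natural upper index is $[(N+(j-3)/2)/3]$, matching the final vanishing criterion, rather than the $[(N+j/2)/3]$ printed in the displayed formula.
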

\subsection{VOAs and conformal $t$-designs}\label{sec:conformal}

First, we review some information about VOAs 
that will be presented later 
in this paper. 
See \cite{Bo}, \cite{FHL}, and \cite{FLM} for 
definitions and elementary information about VOAs 
and their modules. 

A VOA $V$ over the field $\CC$ of 
complex numbers is a complex vector space 
equipped with a linear map $Y : V \rightarrow {\rm End}(V)[[z, z^{-1}]]$ 
and two non-zero vectors $\1$ and $\omega$ in $V$, 
satisfying certain axioms (cf. \cite{{FHL},{FLM}}). 
We denote a VOA $V$ by $(V,Y,\1,\w)$. 
For $v \in V$, we write 
\[
Y(v,z) =\sum_{n\in\ZZ}v(n)z^{-n-1}. 
\]
In particular, for $\w \in V$, we write 
\[
Y(\w,z) =\sum_{n\in\ZZ}L(n)z^{-n-2}, 
\]
and $V$ is graded by $L(0)$-eigenvalues: 
$V=\oplus_{n\in \Z} V_{n}$. 
We note that $\{L(n)\mid n \in Z\} \cup \{id_V \}$
forms a Virasoro algebra. 
%
For $V_n$, $n$ is called the weight. 
In this study, 
we assume that 
$V_{n}=0$ for $n<0$, 
and $V_{0}=\C \bold{1}$. For $v \in V_{n}$, 
the operator $v(n-1)$ is homogeneous and is of degree $0$. 
We set $o(v) = v(n-1)$. 
We also assume that the VOA $V$ 
is isomorphic to a direct sum of the 
highest weight modules for the Virasoro algebra, 
i.e., 
\begin{align}\label{eqn:decom}
V=\bigoplus_{n\geq 0}V(n), 
\end{align}
where each $V(n)$ is a sum of the 
highest weight $V_\w$ modules of the 
highest weight $n$ and $V(0)=V_\w$.


In particular, the decomposition (\ref{eqn:decom}) yields
the natural projection map 
\begin{align*}
\pi : V \rightarrow V_{\omega} 
\end{align*}
with the kernel $\oplus_{n>0}V(n)$. 
Next, we give the definition of a conformal $t$-design, which 
is based on Matsuo's study \cite{Matsuo}. 
\begin{df}[\cite{H1}]\label{df:con}
Let $V$ be a VOA of central charge $c$, 
and let $X$ be an $h$-weight subspace of a module of $V$. 
For a positive integer $t$, $X$ is referred to as a conformal $t$-design 
if, for all $v \in V_{n}$ (where $0 \leq n \leq t$), we have
\begin{align*}
{\rm tr}\vert _{X} o(v) = {\rm tr}\vert _{X} o(\pi (v)). 
\end{align*}
\end{df}

Then, it is easy to prove the following theorem: 
\begin{thm}[\cite{H1}]\label{thm:2.3}
Let $X$ be the homogeneous subspace of a module of 
a VOA $V$.
$X$ is a conformal $t$-design if and only if 
the condition 
$\tr|_{X}o(v) = 0$ holds for all 
homogeneous $v \in \ker \pi 
=\bigoplus_{n>0}V(n)$
of weight $n \leq t$.
%
%
%
\end{thm}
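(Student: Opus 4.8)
The plan is to observe that Theorem~\ref{thm:2.3} is a direct reformulation of Definition~\ref{df:con}, obtained by exploiting the linearity of both the assignment $v \mapsto o(v)$ and of the trace, together with the fact that the projection $\pi$ is compatible with the $L(0)$-grading. First I would record that the decomposition (\ref{eqn:decom}) is a decomposition of $V$ into Virasoro submodules, each of which is itself graded by $L(0)$. Consequently the projection $\pi : V \to V_\omega$ along $\ker\pi = \bigoplus_{n>0} V(n)$ is homogeneous of degree $0$; that is, it maps each weight space $V_n$ into itself. Thus, for any homogeneous $v \in V_n$ we may write
\[
v = \pi(v) + w, \qquad w := v - \pi(v) \in \ker\pi \cap V_n,
\]
where both $\pi(v)$ and $w$ are again homogeneous of weight $n$.

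Next I would apply $o(\cdot)$ and the trace. Since $v \mapsto o(v)$ is linear and $o(v)$ is a degree-$0$ operator (so that $\tr|_X o(v)$ is well defined), linearity of the trace gives
\[
\tr|_X o(v) - \tr|_X o(\pi(v)) = \tr|_X o(w).
\]
Hence the defining equality $\tr|_X o(v) = \tr|_X o(\pi(v))$ holds for a given homogeneous $v \in V_n$ if and only if $\tr|_X o(w) = 0$ for the corresponding $w \in \ker\pi \cap V_n$. Letting $v$ range over a basis of $V_n$ and using linearity once more, the design condition for weight $n$ is equivalent to the vanishing of $\tr|_X o(w)$ for all homogeneous $w \in \ker\pi$ of weight $n$.

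Finally I would handle the range of weights. For $n = 0$ we have $V_0 = \C\1 \subset V_\omega$, so $\pi$ acts as the identity there and the design condition is automatic; equivalently, $\ker\pi$ contains no vectors of weight $0$. Therefore imposing the condition for all $0 \le n \le t$ is the same as imposing $\tr|_X o(w) = 0$ for all homogeneous $w \in \ker\pi$ of weight $1 \le n \le t$, which is precisely the statement of the theorem. I expect the only point that requires genuine care to be the verification that $\pi$ preserves the $L(0)$-grading; once this is established, the equivalence follows as a formal consequence of linearity, consistent with the remark preceding the theorem that it is easy to prove.
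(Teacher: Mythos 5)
Your argument is correct and is exactly the standard one: the paper itself gives no proof (it quotes the result from \cite{H1} with the remark that it is easy), and your reduction via linearity of $v\mapsto o(v)$ and of the trace, the degree-$0$ property of $\pi$ (so that $v-\pi(v)\in\ker\pi\cap V_n$), and the observation that $\ker\pi$ has no weight-$0$ component is precisely the intended verification.
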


\begin{thm}[\cite{H1}]\label{thm:2.4}
Let $V$ be a VOA and 
let $N$ be a $V$-module graded by $\ZZ + h$. 
The following conditions are equivalent:
\begin{enumerate}
\item [{\rm (1)}]
The homogeneous subspaces $N_n$ of $N$ are conformal $t$-designs 
based on $V$ for $n \leq h$.

\item [{\rm (2)}]
For all Virasoro highest weight vectors 
$v \in V_s$ with $0 < s \leq t$ 
and all $n \leq h$ we obtain
\[
\tr|_{N_n} o(v) = 0.
\]
\end{enumerate}
\end{thm}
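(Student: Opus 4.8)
\emph{The plan} is to derive both implications from Theorem \ref{thm:2.3}, which already characterizes the conformal $t$-design property of a single homogeneous subspace $X=N_n$ by the vanishing of $\tr|_{N_n}o(u)$ for every homogeneous $u\in\ker\pi=\bigoplus_{s>0}V(s)$ of weight at most $t$. With this in hand the implication $(1)\Rightarrow(2)$ is immediate: a Virasoro highest weight vector $v\in V_s$ with $0<s\le t$ lies in $V(s)\subset\ker\pi$, so if every $N_n$ (with $n\le h$) is a conformal $t$-design, Theorem \ref{thm:2.3} forces $\tr|_{N_n}o(v)=0$, which is exactly $(2)$.

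For the substantive direction $(2)\Rightarrow(1)$, by Theorem \ref{thm:2.3} it suffices to show $\tr|_{N_n}o(u)=0$ for every homogeneous $u\in\ker\pi$ of weight $\le t$ and every $n\le h$. Since each $V(s)$ with $0<s\le t$ is spanned by Virasoro descendants $L(-n_1)\cdots L(-n_r)v$ of highest weight vectors $v$ of weight $s$, I would reduce the trace of a descendant's zero mode to traces of zero modes of highest weight vectors. The first tool is the $L(-1)$-derivative property: for homogeneous $a$ of weight $s$ one has $(L(-1)a)(m)=-m\,a(m-1)$, hence $o(L(-1)a)=-s\,o(a)$ as operators, so any $L(-1)$ standing at the front of a descendant can be stripped off at the cost of a scalar.

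The remaining operators $L(-k)$ with $k\ge 2$ I would treat by combining the VOA commutator (associativity) formula for the modes of $\omega$ with the cyclicity of the trace on the graded pieces of $N$. Writing $L(-k)=\omega(-k+1)$ and expanding $o(L(-k)a)=(\omega(-k+1)a)(s+k-1)$ by the iterate formula produces a sum of terms of the shape $L(-k-i)\,a(s+k-1+i)$ and $a(s-i)\,L(i-1)$; because $L(-k-i)$ raises the grading by exactly the amount that the accompanying mode of $a$ lowers it, each term has degree $0$, and applying $\tr(AB)=\tr(BA)$ across the relevant graded subspaces lets me push the Virasoro raising/lowering operators past the modes of $a$. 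When $a=v$ is a highest weight vector, the annihilation conditions $L(j)v=0$ for $j>0$ collapse almost all of these contributions, leaving a linear combination---with coefficients polynomial in the $L(0)$-eigenvalue on $N_n$ and the central charge $c$---of traces $\tr|_{N_{n'}}o(v')$ over highest weight vectors $v'$ of weight $\le t$. Hypothesis $(2)$ makes each such term vanish, and the conclusion follows.

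\emph{The main obstacle} is the bookkeeping of this recursion. One must verify that the reduction of $L(-n_1)\cdots L(-n_r)v$ terminates, that every highest weight vector $v'$ produced along the way has weight $\le\wt(u)\le t$ (so that $(2)$ genuinely applies), and that the repeated appeals to trace cyclicity stay within the range of graded subspaces $N_{n'}$ with $n'\le h$ where the design hypothesis is available. Organizing the argument as an induction on the descendant depth $r$ (and on the weight), with the $L(-1)$ and $L(-k)$ reductions above as the base manipulations, is where the real work lies; the underlying identities are the Matsuo-type trace formulae of \cite{Matsuo} specialized to a single homogeneous component.
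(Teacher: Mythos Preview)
The paper does not prove Theorem~\ref{thm:2.4}; it is quoted without argument from H\"ohn~\cite{H1}, so there is no proof in this paper to compare yours against.

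Your outline is nonetheless along the right lines. The implication $(1)\Rightarrow(2)$ is immediate, as you note. For $(2)\Rightarrow(1)$ the reduction of an arbitrary homogeneous $u\in\ker\pi$ of weight $\le t$ to Virasoro highest weight vectors is indeed the whole point, but the execution you sketch---working on a single graded piece $N_n$ and invoking $\tr(AB)=\tr(BA)$ to commute modes---is more delicate than you acknowledge: cyclicity only gives $\tr|_{N_n}(AB)=\tr|_{N_{n'}}(BA)$ on a \emph{different} graded piece $N_{n'}$, so the recursion does not close on a fixed $n$, and the commuted products are not themselves zero modes of lower descendants. The argument in~\cite{H1} avoids this bookkeeping by passing to the full generating function $Z_N(u,\tau)=\sum_n\tr|_{N_n}o(u)\,q^{n-c/24}$ and applying Zhu's recursion~\cite{Zhu}, which expresses $Z_N(L[-k]a,\tau)$ as a combination of $\partial_\tau Z_N(a,\tau)$ and Eisenstein-series multiples of $Z_N(L[j]a,\tau)$ with $j\ge 0$. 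Iterating collapses $Z_N(u,\tau)$ to a quasimodular combination of the functions $Z_N(v,\tau)$ over highest weight vectors $v$ of weight $\le t$; hypothesis $(2)$, taken for all $n$, kills each such $Z_N(v,\tau)$ identically and hence every coefficient $\tr|_{N_n}o(u)$. This is also how the theorem is actually used later in the paper (proofs of Theorems~\ref{thm:main1}, \ref{thm:main2}(2), \ref{thm:main3}): one first shows $Z_V(v,\tau)\equiv 0$ for the relevant $v$ and then invokes Theorem~\ref{thm:2.4} to conclude for every homogeneous piece simultaneously.
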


Let $T$ be a subset of the natural numbers.
As an analogue of the concept of spherical $T$-designs, 
we define the concept of a conformal $T$-design as follows:
\begin{df}
$X$ is a conformal $T$-design if the condition 
$\tr|_{X}o(v) = 0$ holds for all 
homogeneous $v \in \ker \pi 
=\bigoplus_{n>0}V(n)$
of weight $j \in T$.
\end{df}
\begin{rem}
We remark that a conformal $t$-design is actually 
a conformal $\{1,2,\ldots,t\}$-design. 
Therefore, the concept of a conformal $T$-design 
generalizes that of a conformal $t$-design. 
\end{rem}

$V_m$ can be considered to have large symmetry if a homogeneous space of VOA $V_m$
is a conformal $t$-design for higher $t$ \cite{Matsuo}. 
A conformal $t$-design is also a conformal $s$-design 
for all integers $1 \leq s \leq t$. 
Therefore, it is of interest 
to investigate the conformal $t$-design for higher $t$. 


For the notion of admissible, 
we refer to \cite{DLM1988}.
A VOA
is called rational if every admissible module is completely reducible.
A rational VOA $V$ is called 
holomorphic
if the only irreducible module of $V$ up to isomorphism is $V$ itself.
The smallest $h>0$ for which $V(h)\neq 0$ is called the
minimal weight of $V$ and is denoted by $\mu(V)$. 

A holomorphic VOA of central charge $c$ 
exists if and only
if $c \equiv 0 \pmod 8$ \cite{H}. 

\subsection{Graded traces}\label{sec:gra}
In this section, we review the concept of the graded trace. 
As stated earlier, $V$ is a VOA with standard $L(0)$-grading 
\[
V=\bigoplus_{n\geq 0}V_{n}. 
\]
Then, for $v\in V_k$, we define 
the graded trace $Z_{V}(v,z)$ as follows: 
\[
Z_{V}(v, z) = \tr\vert_{V} o(v)q^{L(0)-c/24} = 
q^{-c/24}\sum_{n=0}^{\infty}(\tr\vert_{V_{n}}o(v))q^{n}, 
\]
where $c$ is the central charge of $V$. 
If $v=\1$, then
\[
Z_{V}(\1,z) = \tr\vert_{V} q^{L(0)-c/24} 
= q^{-c/24}\sum_{n=0}^{\infty}(\dim V_{n})q^{n}. 
\]

\begin{thm}[\cite{{Zhu}}]\label{thm:Zhu}
Let $V$ be a holomorphic VOA of central charge $c$. 
Let $v\in V_s$ be a Virasoro highest weight vector of conformal weight $s$. 
Let 
\[
S=
\pm
\begin{pmatrix}
0&1\\
1&0
\end{pmatrix}
\ and \ 
T=
\pm
\begin{pmatrix}
1&1\\
0&1
\end{pmatrix}.
\]
Then 
\[
Z_V(v,q)=q^{-c/24}\sum_{n=0}^{\infty}\tr\vert_{V_n}o(v)q^n
\]
is a meromorphic modular form of weight $s$ for 
$PSL_2(\ZZ)$ with character $\rho$ 
\[
\rho(S)=1\ and\ \rho(T)=e^{-2\pi i c/24}. 
\]
\end{thm}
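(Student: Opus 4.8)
The plan is to follow Zhu's analysis of genus-one trace functions, deriving modular covariance from recursion relations together with a cofiniteness input, and then reading off the weight and the character. Throughout I would work with the ``square-bracket'' vertex operator structure attached to $V$, in which the grading operator $L[0]$ replaces $L(0)$ and a shifted Virasoro vector $\widetilde{\omega}=\omega-(c/24)\1$ is used. The key preliminary observation is that for a Virasoro highest weight vector $v\in V_s$ one has $L[0]v=L(0)v=sv$: the correction terms relating $L[0]$ to $L(0)$ are built from $L(j)v$ with $j>0$, all of which vanish because $v$ is primary. Hence the square-bracket weight of $v$ equals its conformal weight $s$, and this is the modular weight that will appear.

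First I would introduce, alongside $Z_V(v,\tau)=\tr|_V o(v)\,q^{L(0)-c/24}$ with $q=e^{2\pi i\tau}$, the associated $n$-point trace functions obtained by inserting several vertex operators, and prove their absolute convergence on a suitable region of $\mathbb{H}$ so that the formal $q$-series define honest holomorphic functions. Next I would establish Zhu's recursion formula, which expresses an $n$-point function as a sum of $(n-1)$-point functions with coefficients given by the Weierstrass--Eisenstein elliptic functions $P_k(z,\tau)$. This recursion repackages the commutator and associativity relations of $V$ into a form in which the known modular behaviour of the $P_k$ can be transferred to the trace functions; it is the technical engine of the whole argument.

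Then I would bring in the finiteness hypothesis. Since $V$ is holomorphic it is rational and satisfies Zhu's $C_2$-cofiniteness condition, and $C_2$-cofiniteness forces the space spanned by all these trace functions to be finite-dimensional. Combining finite-dimensionality with the modular covariance of the recursion---the $P_k$ are essentially Eisenstein series and transform among themselves under $SL_2(\ZZ)$---shows that this finite-dimensional space is stable under $\tau\mapsto(a\tau+b)/(c\tau+d)$. For the primary vector $v$ of weight $s$ the primary condition collapses the recursion, so that $Z_V(v,\tau)$ transforms homogeneously of weight exactly $s$:
\[
Z_V\Bigl(v,\frac{a\tau+b}{c\tau+d}\Bigr)=(c\tau+d)^s\,\rho(\gamma)\,Z_V(v,\tau),
\]
for a character $\rho$ of $PSL_2(\ZZ)$.

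Finally I would pin down $\rho$. Because $V$ is holomorphic it possesses a unique irreducible module, so the $SL_2(\ZZ)$-representation on the one-dimensional space of characters is itself a character and, after normalization, $\rho(S)=1$. The value $\rho(T)=e^{-2\pi i c/24}$ is then immediate from the prefactor: under $\tau\mapsto\tau+1$ every $q^{n}$ is unchanged while $q^{-c/24}=e^{-2\pi i c\tau/24}$ acquires the phase $e^{-2\pi i c/24}$, and the automorphy factor $(c\tau+d)^s$ equals $1$ for $T$. The hard part will be the analytic and representation-theoretic core of the middle steps: proving convergence, establishing the recursion formula, and---above all---showing that $C_2$-cofiniteness yields both finite-dimensionality and closure of the trace-function space under $SL_2(\ZZ)$. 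This is precisely the substance of Zhu's theorem; once it is in hand, the weight $s$ follows from the primary condition and the explicit character $\rho$ from the elementary transformation of the prefactor.
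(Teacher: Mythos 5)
The paper does not prove this statement at all: it is imported verbatim from Zhu's paper \cite{Zhu} (combined with the standing hypotheses of Section 2.5), so there is no internal argument to compare yours against. Your outline is the standard proof strategy from that reference --- square-bracket formalism, convergence of $n$-point functions, the recursion in terms of the Weierstrass--Eisenstein kernels $P_k$, finite-dimensionality of the space of trace functions from the cofiniteness condition, and $SL_2(\ZZ)$-stability of that space --- and the two easy endgame observations (the square-bracket weight of a primary vector equals $s$, and $\rho(T)=e^{-2\pi i c/24}$ from the prefactor $q^{-c/24}$) are correct and are exactly how the weight and the $T$-value are read off.

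Two points deserve tightening. First, you assert that holomorphy implies Zhu's $C_2$-cofiniteness condition; that implication is not a theorem (rationality $\Rightarrow$ $C_2$-cofiniteness is a well-known open problem), so $C_2$-cofiniteness must be taken as a hypothesis, as it is in \cite{Zhu} and implicitly in this paper's conventions for VOAs. Second, ``after normalization, $\rho(S)=1$'' is not an argument: $\rho(S)$ is determined, not chosen. The correct reasoning is that for a holomorphic $V$ the space of trace functions attached to a fixed $v$ is one-dimensional, $S$ has order $2$ in $PSL_2(\ZZ)$ so $\rho(S)=\pm1$, and evaluating the case $v=\1$ at the fixed point $\tau=i$ of $S$, where $Z_V(\1,i)=\sum_n (\dim V_n)e^{-2\pi(n-c/24)}>0$, forces $\rho(S)=+1$. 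With those repairs your sketch is a faithful roadmap of the cited proof, though of course the analytic core (convergence, the recursion formula, and the modular closure of the trace-function space) is precisely the content of Zhu's theorem and is only named, not established, in your proposal.
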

\begin{thm}[\cite{{DMN}}]\label{thm:DMN1}
Let $L$ be a even unimodular lattice of rank $n$ 
Then, for every element $v$ in $V_L$, we
have
\[
Z(v, z) =\frac{f(v, z)}
{\eta(z)^n},
\]
where $f(v, z)$ is  a sum of modular form of $SL(2,\ZZ)$. 
\end{thm}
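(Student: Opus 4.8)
The plan is to combine the explicit bosonic model of the lattice VOA with Zhu's modularity theorem (Theorem~\ref{thm:Zhu}). Recall that $V_{L}\cong M(1)\otimes\CC[L]=\bigoplus_{\alpha\in L}M(1)\otimes e^{\alpha}$, where $M(1)$ is the rank-$n$ free-boson Fock space and $c=n$. The first step is a reduction to the Heisenberg part: if $v=u\otimes e^{\beta}$ with $\beta\neq 0$, then the weight-preserving operator $o(v)$ carries $M(1)\otimes e^{\alpha}$ to $M(1)\otimes e^{\alpha+\beta}$, so its diagonal matrix entries all vanish and $\tr\vert_{V_{L}}o(v)=0$. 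By linearity it therefore suffices to treat $v$ in the Heisenberg part $M(1)=M(1)\otimes e^{0}$, which I may take to be a monomial $h_{1}(-m_{1})\cdots h_{k}(-m_{k})\1$ in the negative oscillator modes. For the vacuum $v=\1$ one already sees the shape of the answer, since $Z(\1,z)=\Theta_{L}(z)/\eta(z)^{n}$, with $\Theta_{L}$ the theta series of $L$; the factor $\eta(z)^{-n}$ is the universal contribution of the $n$ free bosons.

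For a general monomial I would compute $\tr\vert_{M(1)\otimes e^{\alpha}}o(v)\,q^{L(0)-c/24}$ one summand at a time. On $M(1)\otimes e^{\alpha}$ the Cartan zero modes $h(0)$ act by the scalars $\la h,\alpha\ra$, while the strictly positive modes act as commuting harmonic oscillators; applying Wick's theorem to the oscillator algebra factors the trace as a polynomial $P(\alpha)$ in the momentum $\alpha$ times a $q$-series that is a polynomial in $E_{4},E_{6}$ (and, a priori, $E_{2}$) over the common denominator $\eta(z)^{n}$. Summing over $\alpha\in L$ turns the momentum polynomial into the weighted theta series $\vartheta_{L,P}$, so that
\[
Z(v,z)=\frac{\sum_{i}g_{i}(z)\,\vartheta_{L,P_{i}}(z)}{\eta(z)^{n}},
\]
where each $P_{i}$ is a homogeneous polynomial, which I may assume harmonic after splitting off factors of $(x,x)$. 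Because $L$ is even unimodular, each $\vartheta_{L,P_{i}}$ is a genuine modular form for $SL_{2}(\ZZ)$ by the theory of harmonic theta series (Lemma~\ref{rem:pache_2.1}).

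The remaining and genuinely delicate point is to show that the numerator $f(v,z)=\eta(z)^{n}Z(v,z)$ is an honest sum of modular forms and not merely a quasimodular expression, the danger being the weight-two Eisenstein series $E_{2}$ produced by the oscillator traces. Here I would let Zhu's theorem do the certifying rather than tracking the $E_{2}$-terms by hand. For a Virasoro highest weight vector $v\in V_{s}$, Theorem~\ref{thm:Zhu} gives that $Z(v,z)$ is a meromorphic modular form of weight $s$ for $PSL_{2}(\ZZ)$ with $\rho(S)=1$ and $\rho(T)=e^{-2\pi i c/24}=e^{-\pi i n/12}$. Multiplying by $\eta(z)^{n}$, whose multiplier is $e^{\pi i n/12}$ under $T$ and $(-iz)^{n/2}$ under $S$, cancels the $T$-character exactly, while the $S$-factor $(-i)^{n/2}=1$ because $8\mid n$; thus $f(v,z)$ transforms with trivial multiplier and weight $s+n/2$ on the full modular group. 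Since $V_{L}$ is a positive-energy module, $Z(v,z)$ has at worst the cusp pole of $\eta(z)^{-n}$, so $f(v,z)$ is holomorphic on $\HH$ and at the cusp, hence a holomorphic modular form. Finally, for arbitrary $v$ I decompose $V_{L}$ into highest weight Virasoro modules as in (\ref{eqn:decom}) and apply the highest weight case to the generators, the descendant contributions being reduced to highest weight traces by Zhu's recursion; summing over the resulting pieces exhibits $f(v,z)$ as the asserted sum of modular forms. The crux throughout is that the even unimodular hypothesis simultaneously makes every weighted theta series modular and, through Zhu's character, cancels the $\eta^{n}$ multiplier --- this is exactly what tames the quasimodular $E_{2}$ anomaly left over from the bosonic calculation.
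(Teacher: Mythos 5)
The paper offers no proof of this statement --- it is imported wholesale from Dong--Mason--Nagatomo \cite{DMN} --- so your proposal has to stand on its own rather than be measured against an internal argument. Its skeleton is sound and is essentially the route DMN take: the reduction to the Heisenberg summand $M(1)\otimes e^{0}$ (correct, since $o(u\otimes e^{\beta})$ with $\beta\neq 0$ shifts the $L$-grading and so has vanishing diagonal), the oscillator-trace computation producing quasimodular coefficients against weighted theta series, and the multiplier bookkeeping --- $\rho(T)=e^{-2\pi i n/24}$ cancelled by $\eta^{n}$, with $(-i)^{n/2}=1$ because $8\mid n$ --- are all right, as is the holomorphy at the cusp.

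The gap sits exactly at the point you yourself call the crux and then do not close: the upgrade from quasimodular to genuinely modular for elements that are \emph{not} Virasoro highest weight vectors. Theorem \ref{thm:Zhu} as stated applies only to highest weight vectors, and for those the $E_{2}$ anomaly never arises; the anomaly lives entirely in the descendant sector, which is precisely where you retreat to ``Zhu's recursion''. But that recursion outputs terms of the form $\tilde{E}_{2}(z)Z(w,z)$ and $q\frac{d}{dq}Z(w,z)$, each of which is only quasimodular, so asserting that ``summing over the resulting pieces exhibits $f(v,z)$ as a sum of modular forms'' is the content of the theorem, not a consequence of what precedes it. Two honest ways to close it: (i) invoke Zhu's modular-invariance theorem in its full strength, for the square-bracket grading $V=\bigoplus_{k}V_{[k]}$ --- for a holomorphic, rational, $C_{2}$-cofinite VOA it gives directly that $Z(v,z)$ is a meromorphic modular form of weight $k$ for $v\in V_{[k]}$, and a general $v$ is a finite sum of such components of different weights, whence the word ``sum'' in the statement; or (ii) check by hand that every $E_{2}$ pairs with a derivative of a theta series into a Serre derivative: for instance $v=\omega$ gives $\eta(z)^{n}Z(\omega,z)=\bigl(q\frac{d}{dq}-\frac{n}{24}E_{2}\bigr)\vartheta_{L,1}+\frac{n}{24}\vartheta_{L,1}$, and the first term is the Serre derivative of a weight-$n/2$ form, hence modular of weight $n/2+2$. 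Note finally that DMN state the general-$v$ result in the language of quasi-modular forms, and that for the only use this paper makes of the theorem --- Virasoro highest weight vectors, in tandem with Theorem \ref{thm:Zhu} --- your argument is already complete.
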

\begin{thm}[\cite{{DMN}}]\label{thm:DMN}
Let $P$ be a homogeneous spherical harmonic polynomial
and let $V_L$ be the lattice vertex operator algebra 
associated with an even integral
lattice $L$ of rank $k$. 
Then, there exists a Virasoro highest weight vector $v_P$ 
with the property
\[
Z_{V_L}(v_P,q)=\vartheta_{L,P}(z)/\eta(z)^k,
\]
where 
\[
\eta(z):=q^{1/24}\prod_{i=1}^{\infty}(1-q^i). 
\]
\end{thm}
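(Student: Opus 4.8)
The plan is to work with the standard Fock-space realization $V_L = M(1)\otimes\CC[L]$, where $M(1)$ is the free-boson (Heisenberg) module built on $\h=\CC\otimes_\ZZ L$ and $\CC[L]=\bigoplus_{\alpha\in L}\CC e^{\alpha}$, so that the Virasoro element is $\w=\frac12\sum_i h_i(-1)^2\1$ and the central charge is $c=k$. Fixing an orthonormal basis $h_1,\dots,h_k$ of $\h$, I would identify the degree-$j$ harmonic polynomial $P$ with the state
\[
v_P := P\bigl(h_1(-1),\dots,h_k(-1)\bigr)\1 \in (V_L)_j,
\]
which is well defined since the modes $h_i(-1)$ mutually commute. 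The first task is to verify that $v_P$ is a Virasoro highest weight vector. Using the commutation relation $[L(m),h(n)]=-n\,h(m+n)$, one checks that $L(1)v_P=0$ because each resulting $h_i(0)$ can be moved to the right to annihilate $\1$, and that $L(n)v_P=0$ automatically for $n\geq 3$ for the same weight-counting reason. The only nontrivial relation is $L(2)v_P$, which after contracting $h_i(1)$ against the remaining $h_i(-1)$ factors becomes a nonzero multiple of the state attached to the Laplacian of $P$; hence $L(2)v_P=0$ precisely when $P$ is harmonic. This pins down $v_P$ as the desired primary field.

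Next I would compute the graded trace directly from the coset decomposition $V_L=\bigoplus_{\alpha\in L} M(1)e^{\alpha}$. Since $o(v_P)$ is momentum- and weight-preserving, it stabilizes each block $M(1)e^{\alpha}$, on which $L(0)$ acts as $N+\tfrac{(\alpha,\alpha)}{2}$ with $N$ the number operator, so
\[
Z_{V_L}(v_P,q)=\sum_{\alpha\in L}\ \tr\big|_{M(1)e^{\alpha}}\,o(v_P)\,q^{L(0)-k/24}.
\]
The claim I must establish for each block is that this trace equals $P(\alpha)$ times the free-boson character, i.e. $P(\alpha)\,q^{(\alpha,\alpha)/2}\,q^{-k/24}\prod_{n\geq1}(1-q^n)^{-k}$. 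Granting this, summing over $\alpha\in L$ assembles the momentum factors into $\vartheta_{L,P}(z)=\sum_{x\in L}P(x)e^{i\pi z(x,x)}$ and the oscillator characters into $\eta(z)^{-k}$, giving $Z_{V_L}(v_P,q)=\vartheta_{L,P}(z)/\eta(z)^{k}$ as required.

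The main obstacle is exactly this per-block trace identity. Expanding $o(v_P)$ as a normal-ordered sum of products of modes $h_i(m)$, the zero-modes $h_i(0)$ act on $M(1)e^{\alpha}$ as the scalars $(\alpha,h_i)$, and the term in which every factor contributes a zero-mode yields precisely $P(\alpha)$ times the graded dimension. The difficulty is that the remaining terms involve genuine oscillator pairs $h_i(-m)h_i(m)$ ($m>0$), whose graded traces do not vanish individually; each such self-contraction produces a correction proportional to a partial contraction of the coefficient tensor of $P$ — that is, to the Laplacian of $P$ and its iterates. The heart of the argument is to organize these contractions and show that every correction term carries a factor of $\Delta^r P$ for some $r\geq 1$, which vanishes by harmonicity, leaving only the $P(\alpha)$ contribution. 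I would check the degree-two case explicitly (where the single correction is proportional to $\sum_i a_{ii}=\tfrac12\Delta P=0$) to fix the combinatorics, then handle general $j$ by induction on the degree, tracking how normal ordering and the cyclicity of the trace reduce each partial contraction to a lower-degree harmonic polynomial. Assembling the blocks then completes the proof.
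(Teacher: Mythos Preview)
The paper does not supply a proof of this theorem; it is quoted from \cite{DMN} and used as a black box. So there is no in-paper argument to compare against. Your sketch is, however, exactly the construction underlying the cited result: take $v_P=P(h_1(-1),\dots,h_k(-1))\1$, check primarity via $[L(n),h_i(-1)]=h_i(n-1)$, and compute the graded trace block by block over $M(1)e^{\alpha}$.

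Two small corrections. First, the vanishing of $L(n)v_P$ for $n\geq 3$ is not ``weight-counting''; it is the same commutator mechanism as for $L(1)$: the mode $h_i(n-1)$ with $n-1\geq 2$ commutes with every $h_j(-1)$ (since $[h_i(m),h_j(-1)]=m\delta_{m,1}\delta_{ij}$) and annihilates $\1$. Only $L(2)$ produces a nontrivial contraction, and indeed $L(2)v_P=\tfrac12\,v_{\Delta P}$, so harmonicity is exactly what is needed. Second, in the trace step the correction terms are not organized by naive normal ordering but by Zhu's recursion, which on each block $M(1)e^{\alpha}$ gives
\[
\tr\,o(h(-1)v)\,q^{L(0)-k/24}=(\alpha,h)\,\tr\,o(v)\,q^{L(0)-k/24}+\sum_{m\geq 1}E_{2m}(q)\,\tr\,o(h[2m-1]v)\,q^{L(0)-k/24},
\]
with $E_{2m}$ the Eisenstein series and $h[\,\cdot\,]$ Zhu's square-bracket modes. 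Iterating, every term carrying an Eisenstein factor involves at least one contraction of two $h_i$'s with the same index, i.e.\ a factor of $\Delta^r P$ with $r\geq 1$, and hence vanishes for harmonic $P$. The surviving term is $P(\alpha)$ times the Heisenberg character $q^{(\alpha,\alpha)/2-k/24}\prod_{n\geq1}(1-q^n)^{-k}$, and summing over $\alpha\in L$ yields $\vartheta_{L,P}(z)/\eta(z)^{k}$. With these adjustments your outline matches the argument in \cite{DMN}.
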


\subsection{Graded traces for lattice vertex operator algebras}\label{sec:Lehmer}
In this section, to prove the theorem \ref{thm:main3}, 
we investigate the graded trace of lattice VOAs. 

Let $L$ be an even unimodular lattice of rank $n=24m+8r$. 
Then, $V_L$ is a holomorphic VOA of central charge $c=24m+8r$. 
Let $v \in V_{\ell}$ be a Virasoro highest weight vector 
of weight $\ell$. 
It follows from Theorem \ref{thm:Zhu} and \ref{thm:DMN1} that 
$\eta(z)^c Z_{V_L} (v, z)$ is a modular form of weight 
$c/2+\ell=12m+4r+\ell$ for 
$SL_2(\ZZ)$. 


Let $c=8$, and let $v \in (V_{L})_{8}$ 
Then 
\[
\eta(z)^c Z_{V_L} (v, z)=c_1(v)(q+\cdots)
\]
is a modular form of weight $12$. 
Therefore, we have 
\[
\eta(z)^c Z_{V_L} (v, z)=c_1(v)\Delta(z)
\]
and
\begin{align}\label{df:a(m)}
Z_{V_L}(v,z)&=\frac{c_1(v)\Delta(z)}{\eta(z)^8}=c_{1}(v)\eta(z)^{16}
=c_{1}(v)q^{-1/3}\sum_{i=1}^{\infty}a(i)q^{i}\ (\mbox{say}), 
\end{align}
where $c_{1}(v)$ is a constant that depends on $v$. 
Let $c=16$ and $v \in (V_L)_{4}$. 
Then 
\[
\eta(z)^c Z_{V_L} (v, z)=c_2(v)(q+\cdots)
\]
is a modular form of weight $12$. 
Therefore, we have 
\[
\eta(z)^c Z_{V_L} (v, z)=c_2(v)\Delta(z)
\]
and
\begin{align}\label{df:b(m)}
Z_{V_L}(v,z)&=\frac{c_{2}(v)\Delta(z)}{\eta(z)^{16}}=c_{2}(v)\eta(z)^{8}
=c_{2}(v)q^{-2/3}\sum_{i=1}^{\infty}b(i)q^{i}\ (\mbox{say}), 
\end{align}
where $c_{2}(v)$ is a constant that depends on $v$. 
Let $c=24$, and let $v \in (V_L)_{4}$ 
be a Virasoro highest weight vector of weight $4$. 
Then 
\[
\eta(z)^c Z_{V_L} (v, z)=c_3(v)(q+\cdots)
\]
is a modular form of weight $16$. 
Therefore, we have 
\[
\eta(z)^c Z_{V_L} (v, z)=c_3(v)E_4(z)\Delta(z)
\]
and
\begin{align}\label{df:c(m)}
Z_{V_L}(v,z)&=\frac{c_3(v)E_4(z)\Delta(z)}{\eta(z)^{24}}=c_{3}(v)E_{4}(z)
=c_{3}(v)q^{-1}\sum_{i=1}^{\infty}c(i)q^{i}\ (\mbox{say}), 
\end{align}
where $c_{3}(v)$ is a constant that depends on $v$. 

Then, using an argument 
similar to that presented in the proof of \cite[Theorem 1.2]{Miezaki}, 
we have the following proposition: 
\begin{prop}\label{prop:refor}
Let the notation be the same as before. 
Then, the following ${\rm (i)}$ and ${\rm (ii)}$ are equivalent for all $c\in \{8,16,24\}$$:$ 
\begin{enumerate}
\item [{\rm (1)}]
Case $c=8${\rm :} 
\begin{enumerate}
\item [{\rm (i)}]
$a(\ell)=0$; 
\item [{\rm (ii)}]
$(V_L)_{\ell}$ is a conformal $8$-design.
\end{enumerate}

\item [{\rm (2)}]
Case $c=16${\rm :} 
\begin{enumerate}
\item [{\rm (i)}]
$b(\ell)=0$; 
\item [{\rm (ii)}]
$(V_L)_{\ell}$ is a conformal $4$-design.
\end{enumerate}

\item [{\rm (3)}]
Case $c=24${\rm :} 
\begin{enumerate}
\item [{\rm (i)}]
$c(\ell)=0$; 
\item [{\rm (ii)}]
$(V_L)_{\ell}$ is a conformal $4$-design.
\end{enumerate}
\end{enumerate}
\end{prop}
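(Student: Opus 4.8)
The plan is to reduce the conformal-design property to the vanishing of a single Fourier coefficient of an explicit modular form, by combining the trace criterion of Theorem \ref{thm:2.4} with the modularity of graded traces. First I would invoke Theorem \ref{thm:2.4} to say that $(V_L)_\ell$ is a conformal $t$-design if and only if $\tr\vert_{(V_L)_\ell} o(v)=0$ for every Virasoro highest weight vector $v\in (V_L)_s$ with $0<s\le t$, where $t=8$ when $c=8$ and $t=4$ when $c\in\{16,24\}$. The strategy is therefore to understand, for each such $v$, the generating function $Z_{V_L}(v,z)=q^{-c/24}\sum_{n}(\tr\vert_{(V_L)_n}o(v))q^n$ as a modular form and to read off its coefficients.

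Second, I would use Theorem \ref{thm:Zhu} together with Theorem \ref{thm:DMN1} to conclude that, for a weight-$s$ highest weight vector $v$, the function $\eta(z)^c Z_{V_L}(v,z)$ is a modular form of weight $c/2+s$ for $SL_2(\ZZ)$; moreover it is a \emph{cusp} form, since its constant term is $\tr\vert_{(V_L)_0}o(v)$, which vanishes because $o(v)$ annihilates $V_0=\CC\1$ for $s>0$. The decisive numerical input is the vanishing of the relevant cusp-form spaces. For $c=8$ the weight is $4+s$, and the space of cusp forms for $SL_2(\ZZ)$ is zero for every $4+s$ with $1\le s\le 7$ while being one-dimensional (spanned by $\Delta$) at $s=8$; for $c=16$ the weight is $8+s$, giving no cusp forms for $s=1,2,3$ and the one-dimensional space $\CC\Delta$ at $s=4$; for $c=24$ the weight is $12+s$, giving no cusp forms for $s=1,2,3$ and the one-dimensional space $\CC E_4\Delta$ at $s=4$. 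Hence $Z_{V_L}(v,z)\equiv 0$ for every highest weight vector of weight $s<t$, so those contribute nothing, and the design property collapses to the single top weight $s=t$.

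Third, at $s=t$ the one-dimensionality forces $\eta^c Z_{V_L}(v,z)=c_i(v)\,\Phi$ for a fixed generator $\Phi$ ($\Phi=\Delta,\Delta,E_4\Delta$ in the three cases), with a scalar $c_i(v)$ depending linearly on $v$; dividing by $\eta^c$ and using $\Delta=\eta^{24}$ gives $Z_{V_L}(v,z)=c_i(v)\eta^{16}$, $c_i(v)\eta^{8}$, and $c_i(v)E_4$ respectively. Reading off the $q$-expansions exactly reproduces the coefficients $a(\ell),b(\ell),c(\ell)$, so that $\tr\vert_{(V_L)_\ell}o(v)=c_i(v)\,a(\ell)$ (respectively $b(\ell)$, $c(\ell)$) for \emph{every} top-weight highest weight vector $v$, the $\ell$-dependence being universal and only the scale $c_i(v)$ depending on $v$. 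The implication (i)$\Rightarrow$(ii) is then immediate: if the coefficient vanishes, every trace vanishes and $(V_L)_\ell$ is a conformal $t$-design.

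Finally, for (ii)$\Rightarrow$(i) I must exhibit one top-weight highest weight vector $v$ with $c_i(v)\neq 0$, so that the design hypothesis forces the universal coefficient to vanish; this is the step I expect to be the main obstacle. Here I would use Theorem \ref{thm:DMN}: a homogeneous harmonic polynomial $P$ produces a highest weight vector $v_P$ with $Z_{V_L}(v_P,z)=\vartheta_{L,P}(z)/\eta(z)^{\dim L}$, so $c_i(v_P)\neq 0$ as soon as some weighted theta series $\vartheta_{L,P}$ (of degree $8$ when $c=8$, degree $4$ when $c\in\{16,24\}$) is nonzero, and Lemma \ref{rem:pache_2.1} identifies this theta series with a multiple of the cusp-form generator. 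For $c=8$, for $c=16$, and for $c=24$ with $L$ not the Leech lattice, a suitable $P$ gives a nonzero multiple, finishing the proof. The delicate case is $c=24$ with $L$ the Leech lattice, where Lemma \ref{rem:pache_2.1} forces $\vartheta_{L,P}=0$ for \emph{all} degree-$4$ harmonic $P$; there the required vector is not of harmonic type, and one must instead produce a weight-$4$ highest weight vector with nonzero graded trace directly from the module structure of $V_L$. This nondegeneracy is exactly the point that underlies the strength-exactly-$3$ conclusion of Theorem \ref{thm:main4}(1).
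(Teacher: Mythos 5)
Your proposal is correct and follows essentially the same route as the paper: reduce via Theorem \ref{thm:2.4} and the modularity of $\eta(z)^c Z_{V_L}(v,z)$ to the one-dimensional spaces $\CC\Delta$, $\CC\Delta$, $\CC E_4\Delta$ at the top weight (lower weights contributing nothing), so that $\tr\vert_{(V_L)_\ell}o(v)$ is a universal coefficient $a(\ell)$, $b(\ell)$, $c(\ell)$ times a scalar $c_i(v)$, and then establish nondegeneracy via Theorem \ref{thm:DMN} and Pache's lemma (the paper instead cites H\"ohn's result that $(V_{E_8})_1$ is not a conformal $8$-design for the $c=8$ case). The only piece you leave unspecified is the Leech-lattice case, where the paper exhibits the explicit weight-$4$ highest weight vector $v_{4}=h_1(-1)^4\1-2h_1(-3)h_1(-1)\1+\frac{3}{2}h_1(-2)^2\1$ and checks $\tr\vert_{(V_L)_{1}}o(v_{4})\neq 0$ directly.
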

\begin{proof}

\begin{enumerate}
\item [(1)]
Let $c=8$. Note that $L\cong E_8$. 
First, note that for $v\in (V_L)_{8}$, by (\ref{df:a(m)}), 
we have 
\[
\eta(z)^{8} Z_{V_{L}}(v,z)=c_1(v)\Delta(z) \in M_{12}(SL_2(\ZZ)).
\]
Assume that $a(\ell)=0$. 
Then, for any $v\in (V_{L})_{8}$, 
we have $\tr\vert_{(V_{L})_{\ell}}o(v)= 0$. 
Therefore, 
$(V_{L})_{\ell}$ is a conformal $8$-design.

Next, we assume the contrary, that is, $a(\ell)\neq 0$. 
Since 
$(V_{E_8})_{1}$ is not a conformal $8$-design 
(cf~\cite[Theorem 4.2 (i)]{H1}), 
by (\ref{df:a(m)}), 
there exists $v\in (V_{E_8})_{8}$ 
of weight $8$ 
such that 
\[
Z_{V_{E_8}}(v,z)
=c_1(v)q^{-1/3}\sum_{i=1}^{\infty}a(i)q^{i}, 
\]
where $c_1(v)\neq 0$. 
Hence, we have 
\[
\tr\vert_{(V_{E_8})_{\ell}}o(v)=c_1(v)\times a(\ell)\neq 0, 
\]
which implies that 
$(V_{E_8})_{\ell}$ is not a conformal $8$-design. 
This completes the proof of Case $1$.


\item [(2)]
Let $c=16$. 
For $v\in (V_L)_{4}$, by (\ref{df:b(m)}), 
we have 
\[
\eta(z)^{16} Z_{V_{L}}(v,z)=c_2(v)\Delta(z) \in M_{12}(SL_2(\ZZ)), 
\]
where $c_2(v)$ is a constant that depends on $v$. 
Assume that $b(\ell)=0$. 
Then, for any $v\in (V_{L})_{4}$, 
we have $\tr\vert_{(V_{L})_{\ell}}o(v)= 0$. 
Therefore, 
$(V_{L})_{\ell}$ is a conformal $4$-design.

On the other hand, 
based on \cite[Lemma 31]{Pache}, 
there exists $P\in {\rm Harm}_4(\RR^{16})$ such that 
$\vartheta_{L,P}(z)=d_1(P)\Delta(z)$, 
where $d_1(P)$ is a non-zero constant. 
Therefore, based on Theorem \ref{thm:DMN}, 
there exists $v_P\in (V_L)_4$ such that 
\[
Z_{V_L}(v_P,z)=d_1(P)\Delta(z)/\eta(z)^{16}
=d_1(P)q^{-2/3}\sum_{i=1}^{\infty}b(i)q^{i}. 
\]
We have $c(P)\times b(1)\neq 0$, that is, 
$(V_{L})_1$ is not a conformal $4$-design. 
Then, the rest of the proof is similar to that of Case $1$. 

\item [(3)]

Let $c=24$. 
For $v\in (V_L)_{4}$, by (\ref{df:c(m)}), 
we have 
\[
\eta(z)^{24} Z_{V_{L}}(v,z)=c_3(v)E_4(z)\Delta(z) \in M_{16}(SL_2(\ZZ)), 
\]
where $c_3(v)$ is a constant that depends on $v$. 
Assume that $c(\ell)=0$. 
Then, for any $v\in (V_{L})_{4}$, 
we have $\tr\vert_{(V_{L})_{\ell}}o(v)= 0$. 
Therefore, 
$(V_{L})_{\ell}$ is a conformal $4$-design.

Let $L$ be a lattice that is not a Leech lattice. Then, 
based on \cite[Lemma 31]{Pache}, 
there exists $P\in {\rm Harm}_4(\RR^{24})$ such that 
$\vartheta_{L,P}(z)= d_2(P)E_4(z)\Delta(z)$, 
where $d_2(P)$ is a non-zero constant. 
Therefore, based on Theorem \ref{thm:DMN}, 
there exists $v_P\in (V_L)_4$ such that 
\[
Z_{V_L}(v_P,z)=d_2(P)E_{4}(z)\Delta(z)/\eta(z)^{24}
=q^{-1}\sum_{i=1}^{\infty}c(i)q^{i}. 
\]
We have $d_2(P)\times c(1)\neq 0$, that is, 
$(V_{L})_1$ is not a conformal $4$-design. 
The rest of the proof is similar to that of Case $1$. 

Let $L$ be a Leech lattice. 
Then, $(V_{L})_{1}=\langle h_1(-1)\1,\ldots,h_{24}(-1)\1
\rangle$, 
where $\{h_{i}\}_{i=1}^{24}$ are the orthonormal basis of $\h$. 
Let $$v_{4}=h_1(-1)^4\1-2h_1(-3)h_1(-1)\1+\frac{3}{2}h_1(-2)^2\1.$$
Then, $v_{4}$ is the highest weight 
vector in $(V_{L})_{4}$ 
(see \cite[Proposition 3.2]{Miezaki}).
Then, we have $\tr\vert_{(V_{L})_{1}}o(v_{4})\neq 0$, 
That is, $(V_{L})_{1}$ is not a conformal $4$-design. 
The rest of the proof is similar to that of Case $1$. 

\end{enumerate}

\end{proof}

\section{Proof of Theorem \ref{thm:main1}}\label{sec:main1}
In this section, we show Theorem \ref{thm:main1}. 
\begin{proof}[Proof of Theorem \ref{thm:main1}]

Let $V$ be a holomorphic VOA of central charge $c=24m$. 
Let $v \in V_{\ell}$ be a Virasoro highest weight vector 
of weight $\ell$. 
It follows from Theorem from Theorem \ref{thm:Zhu} and \ref{thm:DMN1} that 
$\eta(z)^c Z_V (v, z)$ is a modular form of weight 
\[
c/2+\ell=12m+\ell
\]
for 
$SL_2(\ZZ)$. 

Assume that $\ell=1$ or $\ell=3$. 
Then, there is no non-zero 
holomorphic modular form 
of weight $12m+\ell$. 

Assume that $\ell=2$. 
Then 
\[
\eta(z)^{c} Z_V (v, z)=q^{\frac{c}{24}}(1+\cdots)^{\frac{c}{24}}q^{-\frac{c}{24}}(c_mq^m+\cdots)=c_mq^m+\cdots.
\]
Then, there is no non-zero 
holomorphic modular form 
of weight $12m+2$ such that 
the leading term is $c_mq^m+\cdots$, that is, $Z_V (v, q) = 0$. 
Therefore, we can say that 
any homogeneous spaces of $V$ are conformal $3$-design, 
by Theorem \ref{thm:2.4}. 

\end{proof}

\section{Proof of Theorem \ref{thm:main2}}\label{sec:main2}
\subsection{Proof of Theorem \ref{thm:main2} (1)}
In this section, we show Theorem \ref{thm:main2} (1). 
\begin{proof}[Proof of Theorem \ref{thm:main2} (1)]
Let $f \in \Harm_{\ell}$ with $\ell\in T_2$. 
Let \[
w_{C,f}(x,y)=\sum_{i=0}^{n}c_{C,f}(i)x^{n-i}y^i. 
\]
It is sufficient to show that 
$c_{C,f}(\ell)+c_{C,f}(n-\ell)=0$. 
Let $\ell\equiv 3\pmod{4}$. Then, from Theorem \ref{thm:invariant}, 
\[
w_{C,f}(x,y)=P_{18}(x,y)\times (\mbox{a polynomial of } 
P_8(x,y) \mbox{ and } P_{24}(x,y)). 
\]
Note that $P_8(x,y)=P_8(y,x)$, $P_{24}(x,y)=P_{24}(y,x)$ and 
$P_{18}(x,y)=-P_{18}(y,x)$. 
These imply that $w_{C,f}(x,y)=-w_{C,f}(y,x)$ and 
$c_{C,f}(\ell)=-c_{C,f}(n-\ell)$. 
This completes the proof for the case $\ell\equiv 3\pmod{4}$. 
The case $\ell\equiv 1\pmod{4}$ can be proved in a similar manner. 
\end{proof}
The following corollary is 
obtained in \cite{{alltop},{MN}}. 
Theorem \ref{thm:main2} gives a new proof. 
\begin{cor}[\cite{{alltop},{MN}}]\label{cor:main2}
Let $C$ be a doubly even self-dual code of length $n=24m+8r$. 
Then, any $C_{n/2}$ 
forms a combinatorial 
$T_2$-design. 
In particular, any $C_{n/2}$ 
forms a combinatorial 
$1$-design. 
\end{cor}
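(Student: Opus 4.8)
The plan is to read off the claim from the antisymmetry of the harmonic weight enumerator already established in the proof of Theorem \ref{thm:main2}(1), specialized to the central coefficient $i = n/2$. One cannot simply substitute $\ell = n/2$ into Theorem \ref{thm:main2}(1), because a combinatorial design with $2$-weight is defined for a pair of \emph{distinct} weights $k \neq \ell$; at $\ell = n/2$ the two weights coincide, so the statement must be obtained directly at the level of the enumerator.

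First I would fix $f \in \Harm_j$ with $j \in T_2$ (hence $j$ odd) and expand
\[
w_{C,f}(x,y) = \sum_{i=0}^{n} c_{C,f}(i)\, x^{n-i} y^{i}.
\]
The proof of Theorem \ref{thm:main2}(1) shows, via the factorization of Theorem \ref{thm:invariant} and the relations $P_8(x,y)=P_8(y,x)$, $P_{24}(x,y)=P_{24}(y,x)$, $P_{18}(x,y)=-P_{18}(y,x)$ (together with the induced relation $P_{30}(x,y)=-P_{30}(y,x)$), that for odd $j$ the enumerator is antisymmetric:
\[
w_{C,f}(x,y) = -\,w_{C,f}(y,x).
\]
Comparing coefficients yields $c_{C,f}(i) = -c_{C,f}(n-i)$ for every $i$.

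The key step is then to put $i = n/2$, which is an integer since $n \equiv 0 \pmod 8$. The antisymmetry forces $c_{C,f}(n/2) = -c_{C,f}(n/2)$, so $c_{C,f}(n/2) = 0$. By the definition of the harmonic weight enumerator, $c_{C,f}(n/2) = \sum_{x \in \mathcal{B}(C_{n/2})} \widetilde{f}(x)$, and its vanishing for all $f \in \Harm_j$ with $j \in T_2$ is exactly the defining condition (cf. Theorem \ref{thm:design}) for $C_{n/2}$ to be a combinatorial $T_2$-design. Since $1 \in T_2$, a $T_2$-design is in particular a $1$-design, which gives the final assertion.

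There is essentially no obstacle here beyond the definitional subtlety noted above: all of the analytic content sits in the antisymmetry $w_{C,f}(x,y) = -w_{C,f}(y,x)$, and the only genuinely new observation is the elementary fact that an antisymmetric polynomial in $x,y$ has a vanishing central coefficient. The one point meriting care is to confirm that $C_{n/2}$ is being handled as a single-weight shell, so that the vanishing of the single coefficient $c_{C,f}(n/2)$—rather than of a sum of two coefficients—is what the design condition requires.
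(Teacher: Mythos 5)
Your proposal is correct and rests on exactly the same analytic content as the paper's proof, namely the antisymmetry $w_{C,f}(x,y)=-w_{C,f}(y,x)$ for $f\in\Harm_j$ with $j$ odd, coming from Theorem \ref{thm:invariant}. The paper's own proof is a one-liner: it specializes Theorem \ref{thm:main2}(1) to $k=n/2$ and observes that $C_{n/2}\cup C_{n-n/2}=C_{n/2}$, so no new computation is performed. Your version is in fact slightly more careful: as you note, the definition of a combinatorial design with $2$-weight in the paper requires the two block sizes to be distinct, so the literal substitution $k=n/2$ into Theorem \ref{thm:main2}(1) is formally degenerate; rederiving the coefficient identity $c_{C,f}(i)=-c_{C,f}(n-i)$ and reading off $c_{C,f}(n/2)=0$ directly sidesteps this and yields the same conclusion.
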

\begin{proof}
From Theorem \ref{thm:main2}, 
$C_k\cup C_{n-k}$ 
forms a combinatorial 
$T_2$-design with $2$-weight. 
If $k=n/2$, then $C_{n/2}\cup C_{n-n/2}=C_{n/2}$. 
This completes the proof of Corollary \ref{cor:main2}. 
\end{proof}

\subsection{Proof of Theorem \ref{thm:main2} (2)}
In this section, we show Theorem \ref{thm:main2} (2). 
\begin{proof}[Proof of Theorem \ref{thm:main2} (2).] 
Let $V$ be a holomorphic VOA of central charge $c=24n+8r$. 
Let $v \in V_{\ell}$ be a Virasoro highest weight vector 
of weight $\ell$. 
It follows from Theorem \ref{thm:Zhu} and \ref{thm:DMN1} that 
$\eta(z)^c Z_V (v, z)$ is a modular form of weight 
$c/2+\ell=12n+4r+\ell$ for 
$SL_2(\ZZ)$. Assume that $\ell \equiv 1 \pmod 2$. 
Then, there is no non-zero holomorphic modular form of 
odd weight $c/2+\ell$, that is, $Z_V (v, q) = 0$. 
Therefore, we have that 
any homogeneous spaces of $V$ are conformal $T$-design, 
by Theorem \ref{thm:2.4}. 

\end{proof}

\section{Proof of Theorem \ref{thm:main3} and \ref{thm:main4}}\label{sec:main34}
In this section, we give the proof of 
Theorem \ref{thm:main3} and \ref{thm:main4}.

\subsection{Proof of Theorem \ref{thm:main3}}

\begin{proof}[Proof of Theorem \ref{thm:main3}] 

Let $V$ be a holomorphic VOA of central charge $c=8$. 
Let $v \in V_{\ell}$ be a Virasoro highest weight vector 
of weight $\ell$. 
Then, 
$\eta(z)^{c} Z_V (v, z)$ is a modular form of weight 
$4+\ell$ for 
$SL_2(\ZZ)$. 
However, there is no non-zero holomorphic modular form of 
weight $4+\ell$ with 
\[
\ell\in \{1,2,3,4,5,6,7,9,10,11\}, 
\]
that is, $Z_V (v, z) = 0$. 
Therefore, we conclude that 
any homogeneous spaces of $V$ are 
\[
{\rm conformal\ }
\{1,2,3,4,5,6,7,9,10,11\}\cup T_{2}{\rm -design}, 
\]
from Theorem \ref{thm:2.4} and Theorem \ref{thm:main2}. 
In particular, 
any homogeneous spaces of $V$ are conformal 
$7$-designs. 
The proof for the cases $c = 16,24$ are similar. 
This completes the proof of Theorem \ref{thm:main2}. 
\end{proof}

\subsection{Proof of Theorem \ref{thm:main4}}

\begin{proof}[Proof of Theorem \ref{thm:main4} (1)] 
By Proposition \ref{prop:refor}, 
it is sufficient to show that if ${\ord_p (3\ell-2)}$ is odd, then 
$b(\ell)=0$; otherwise $b(\ell)\neq 0$, 
where $b(\ell)$ is defined by (\ref{df:b(m)}). 
Recall that 
\begin{align*}
Z_{V_{L}}(v,z)&=\frac{c_{2}(v)\Delta(z)}{\eta(z)^{16}}=c_{2}(v)\eta(z)^{8}\\
&=c_{2}(v)q^{-2/3}\sum_{i=1}^{\infty}b(i)q^{i} \\
&=c_{2}(v)q^{-16/24}(q-8 q^2+20 q^3-70 q^5+64 q^6+56 q^7-125 q^9+\cdots). 
\end{align*}
Set
\begin{align}\label{eqn:eta3}
\eta(3z)^{8}&=\sum_{i=1}^{\infty}b^{\prime}(i)q^{i}\\
&=q-8q^4+20q^7-70q^{13}+\cdots \nonumber.
\end{align}
The exponents of the power series of (\ref{eqn:eta3}) 
are $1$ modulo $3$. 
By \cite[Theorem 2.1, Corollary 2.2]{HO}, 
for $p\equiv 2\pmod{3}$,
if ${\ord_p (\ell)}$ is odd, then 
$b^{\prime}(\ell)=0$; otherwise $b^{\prime}(\ell)\neq 0$, and 
if $(\ell,n)=1$, then 
\begin{eqnarray*}
b^{\prime}(\ell n)=b^{\prime}(\ell)b^{\prime}(n). \label{eqn:mul}
\end{eqnarray*}
Using these properties of $b^{\prime}(\ell)$, 
if ${\rm ord}_{p}(3\ell-2)$ is odd for some prime $p\equiv 2\pmod{3}$, 
then $(V_{L})_{\ell}$ is a conformal $4$-design; 
otherwise, the homogeneous spaces 
$(V_{L})_{v}$ are not conformal $4$-designs. 

Finally, we show that 
if ${\rm ord}_{p}(3\ell-2)$ is odd for some prime $p\equiv 2\pmod{3}$, 
then $(V_{L})_{\ell}$ is a conformal $7$-design. 
From Theorem \ref{thm:main2}, 
we show that $(V_{L})_{\ell}$ is a 
conformal $\{1,2,3,5,6,7\}\cup T_{2}$-design. 
As shown above, 
for $v\in (V_L)_{k}$ ($5\leq k\leq 7$), 
we conclude that $(V_{L})_{\ell}$ is a conformal $4$-design, 
that is, it is a conformal $\{1,2,3,4,5,6,7\}\cup T_{2}$-design. 
Hence, $(V_{L})_{\ell}$ is a conformal $7$-design.
Thus, the proof is complete. 
\end{proof}

\begin{proof}[Proof of Theorem \ref{thm:main4} (2)] 
Based on Proposition \ref{prop:refor}, 
it is sufficient to show that for $\ell\geq 1$, 
$c(\ell)\neq 0$, where $c(\ell)$ is defined by (\ref{df:c(m)}). 
Based on (\ref{df:c(m)}), we have 
$c(\ell)=\sigma_{3}(\ell)$, 
where $\sigma_{3}(j)$ is a divisor function 
$\sigma_{3}(j)=\sum_{d\vert j}d^{3}$. 
Then, for $\ell\geq 1$, we have $c(\ell)=\sigma_{3}(\ell)\neq 0$. 
\end{proof}


\section{Concluding Remarks}\label{sec:rem}
\begin{enumerate}


\item 
[(1)]
Let $L$ be an even unimodular lattice of rank $16$. 
We showed in Theorem \ref{thm:main4} that, 
if ${\rm ord}_{p}(3\ell-2)$ is odd for some prime $p\equiv 2\pmod{3}$, 
then $(V_{L})_{\ell}$ is a conformal $7$-design. 
It is an interesting, unsolved problem to 
determine whether $(V_{L})_{\ell}$ is 
a conformal $8$-design. 
Let $E_{4}(z)\eta(z)^{8}=
q^{-2/3}\sum_{i=1}^{\infty}d(i)q^i$. 
Using the same method as in the proof of 
Proposition \ref{prop:refor}, 
we can say that $(V_{L})_{\ell}$ is 
a conformal $8$-design if and only if 
$d(\ell)=0$. 

\item 
[(2)]
Let $L$ be an even unimodular lattice of rank $8$ 
(i.e., $L=E_8$-lattice). 
Then, 
based on Proposition \ref{prop:refor}, 
the homogeneous space $(V_{L})_{\ell}$ is 
a conformal $8$-design if and only if 
$a(\ell)=0$, where $a(\ell)$ is defined as follows: $\eta(z)^{16}
=q^{-1/3}\sum_{i=1}^{\infty}a(i)q^{i}$. 
It is conjectured in \cite{Serre} that 
$a(\ell)\neq 0$ for all $\ell$. 
Using the same argument as in \cite{{Lehmer},{BM1},{BM2}}, 
we can say that, 
if $a(p)\neq 0$ for all prime numbers $p$, 
then $a(\ell)\neq 0$ for all $\ell$. 

\item 
[(3)]
Note that there is no known combinatorial $6$-design 
among the $C_\ell$ of code $C$. 
Also note that there are no known spherical or conformal $12$-designs 
among the shell and the homogeneous spaces of any 
lattices or VOAs, except for 
the trivial case $V_{A_{1}}$~\cite[Example 2.6.]{H1}. 
It is an interesting, unsolved
problem to show whether there exists 
a combinatorial $6$-design obtainable from codes, 
a spherical $12$-design obtainable from lattices, or 
a conformal $12$-design obtainable from VOAs. 

\item 
[(4)]
Let $L=A_{1}$-lattice (namely, $L=\sqrt{2}\ZZ=\langle \alpha\rangle_{\ZZ}$). 
Then, all homogeneous spaces of 
the lattice VOA $V_{L}$ 
are conformal $t$-designs for all $t$ (cf.~\cite{H1}). 
This is because $(V_{L})^{\Aut(V_{L})}=V_{\w}$. 
Here, let $\theta$ be an element in $\Aut(V_{L})$ of order $2$, 
which is a lift of $-1\in \Aut(L)$, and 
let $V_{L}^{+}$ be the fixed points of the 
VOA $V_L$ associated with $\theta$. 
Then, all the homogeneous spaces of 
$V_{L}^{+}$ 
are conformal $3$-designs because 
$((V_{L})^{\Aut(V_{L})})_{\leq 3}=(V_{\w})_{\leq 3}$ and 
because of {\cite[Theorem 2.5]{H1}}. 
On the other hand, let 
$v_{4}=\alpha(-1)^4\1-2\alpha(-3)\alpha(-1)\1+\frac{3}{2}\alpha(-2)^2\1
\in (V_{L}^{+})_{4}$. 
Then, we calculate the graded trace as follows \cite{DG}: 
\[
Z_{V_{L}^{+}}(v_{4},z)=q^{1/24}\frac{\eta(2z)^{15}}{\eta(z)^{7}}. 
\]
Therefore, if the Fourier coefficients of $Z_{V_{L}^{+}}(v_{4},z)$ 
do not vanish, then none of the homogeneous spaces of $V_{L}^{+}$ 
are conformal $4$-designs. 
We have checked numerically 
that the coefficients do not vanish up to the exponent $1000$. 

\item 
[(5)]
Using \cite{Pache} and \cite{BM1}, 
we show the following theorem: 
\begin{thm}[cf.~\cite{Pache}, \cite{BM1}]
The shells in the $\mathbb{Z}^{2}$-lattice are spherical $3$-designs 
and are not spherical $4$-designs. 
The shells in the $A_{2}$-lattice are spherical $5$-designs 
and are not spherical $6$-designs. 
\end{thm}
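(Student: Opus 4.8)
The plan is to treat the two assertions for each lattice separately, since the ``is a $t$-design'' part and the ``is not a $(t+1)$-design'' part call for completely different tools. Both $\ZZ^2$ and $A_2$ sit in $\R^2$, so I would first record the basic fact that $\dim \Harm_j(\R^2)=2$ for every $j\ge 1$, a basis being the real and imaginary parts of $z^j$, where $z=x_1+ix_2$. Under a rotation $w\mapsto \zeta w$ one has $z^j\mapsto \zeta^j z^j$; hence if a shell $L_m$ is invariant under such a rotation, then $\sum_{x\in L_m}P(x)=\zeta^j\sum_{x\in L_m}P(x)$ for every $P\in\Harm_j$, which forces the shell-sum to vanish whenever $\zeta^j\neq 1$.

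The design direction is then immediate. The lattice $\ZZ^2$ is invariant under the order-$4$ rotation $w\mapsto iw$, and $A_2$ under the order-$6$ rotation $w\mapsto e^{i\pi/3}w$. Thus all shell-sums of elements of $\Harm_j$ vanish unless $4\mid j$ (for $\ZZ^2$) or $6\mid j$ (for $A_2$). In particular they vanish for $j=1,2,3$ in the first case and for $j=1,\dots,5$ in the second, so by Theorem \ref{sec:DGS} every non-empty shell of $\ZZ^2$ is a spherical $3$-design and every non-empty shell of $A_2$ is a spherical $5$-design.

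For the negative direction I would, using Theorem \ref{sec:DGS} together with Lemma \ref{lem:lempache}, reduce ``no shell is a $4$-design'' (resp.\ $6$-design) to exhibiting a single harmonic of the critical degree whose shell-sums never vanish. Each lattice also admits the reflection $z\mapsto\bar z$, under which $\Ima(z^j)$ is odd, so the imaginary parts contribute $0$ and the natural candidates are the real parts $P_4:=\mathrm{Re}(z^4)$ for $\ZZ^2$ and $P_6:=\mathrm{Re}(z^6)$ for $A_2$. It then suffices to prove that every Fourier coefficient $a^{(P)}_m$ of the weighted theta series $\vartheta_{L,P}$ is non-zero on the non-empty shells. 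Passing to the Gaussian integers $\ZZ[i]$ and the Eisenstein integers $\ZZ[\omega]$, these weighted theta series become Hecke theta series with complex multiplication; they are CM cusp forms of weight $5$ and $7$ respectively, which can be written explicitly (as eta-quotients, or as CM newforms) following \cite{Pache} and \cite{BM1}. Using the multiplicativity of their coefficients I would reduce the non-vanishing to the prime-power coefficients and verify that each local factor is non-zero.

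The hard part will be exactly this last step. The design direction is a soft symmetry argument, but the non-vanishing of $a^{(P)}_m$ for \emph{every} represented $m$ is a genuinely arithmetic statement about a CM modular form, where a priori cancellations could occur. The plan is to lean on the Hecke-eigenform structure to localise the problem at prime powers and to invoke the explicit product formulas and non-vanishing results of \cite{Pache} and \cite{BM1}, the delicate point being to confirm that no global product of local factors can vanish. This is entirely analogous to the $\tau$-function non-vanishing questions discussed above in connection with Theorem \ref{thm:Lehmer}, which is precisely why such strengths are hard to pin down in general.
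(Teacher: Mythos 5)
Your outline is correct and coincides with the intended argument: the paper itself gives no proof of this theorem, merely attributing it to \cite{Pache} and \cite{BM1}, and your two-step plan (the order-$4$, resp.\ order-$6$, rotational symmetry forcing all shell-sums of harmonics of degree $j$ with $4\nmid j$, resp.\ $6\nmid j$, to vanish; then non-vanishing of the coefficients of the weight-$5$ and weight-$7$ CM theta series attached to $\mathrm{Re}(z^4)$ and $\mathrm{Re}(z^6)$ via Hecke multiplicativity) is exactly what is carried out in those references. The only point worth adding is that the ``hard'' non-vanishing step is genuinely tractable in these CM cases --- \cite{BM1} closes the prime-power computation using a Niven-type irrationality argument for the angle of the prime $\pi$ in $\ZZ[i]$ or $\ZZ[\omega]$ --- which is precisely why these toy models can be resolved while the non-CM analogues (Lehmer's conjecture) remain open.
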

In \cite{MMS}, they showed that 
the homogeneous spaces of $V_{A_{2}}$ are conformal $5$-designs. 
Therefore, it is natural to ask whether 
the corresponding results hold for the lattice VOAs 
$V_{\sqrt{2}\ZZ^{2}}$ and $V_{A_{2}}$. 
Specifically, 
\begin{enumerate}

\item 
Are the homogeneous spaces of $V_{\sqrt{2}\ZZ^{2}}$ conformal $3$-designs 
and not conformal $4$-designs?

\item
Are the homogeneous spaces of $V_{A_{2}}$ 
not conformal $6$-designs?
\end{enumerate}
As interesting, unsolved problems, these questions remain to be answered in the affirmative or negative\footnote{Prof.~Shimakura pointed us 
to the following:
It can be proved that 
the homogeneous spaces of $V_{\sqrt{2}\ZZ^{2}}$ are conformal $3$-designs 
and not conformal $4$-designs. Moreover, 
the homogeneous spaces of $V_{A_{2}}$ are
not conformal $6$-designs \cite{Shimakura}. 
}
\end{enumerate}

\section*{Acknowledgments}
The author would like to thank Prof.~Hiroki Shimakura, 
for his helpful discussions and contributions to this research. 
The author would also like to
thank the anonymous reviewers for their beneficial comments on an earlier
version of the manuscript. 
This work was supported by JSPS KAKENHI (18K03217).

\end{document}